\documentclass[letterpaper]{article}
\usepackage{geometry}
\pdfoutput=1 
\usepackage{amsmath,amssymb,amsthm,times,dsfont,framed, graphicx, textcomp,mathtools}
\usepackage{tikz, braids}
\usepackage[hidelinks]{hyperref}
\usepackage{listings}
\usepackage{xcolor}
 
\definecolor{background}{rgb}{0.94,0.94,0.92}
 
\lstdefinestyle{mystyle}{
    backgroundcolor=\color{background},   
    commentstyle=,
    keywordstyle=,
    numberstyle=,
    stringstyle=,
    basicstyle=\ttfamily\footnotesize,
    breakatwhitespace=false,         
    breaklines=true,                 
    captionpos=b,                    
    keepspaces=true,                 
    numbers=left,                    
    numbersep=5pt,                  
    showspaces=false,                
    showstringspaces=false,
    showtabs=false,                  
    tabsize=2
}
 
\lstset{style=mystyle}
\newtheorem{alg}{Algorithm}
\newtheorem{defn}{Definition}
\newtheorem{prop}{Proposition}
\newtheorem{lem}{Lemma}
\newtheorem{thm}{Theorem}
\newtheorem{examp}{Example}

\newtheorem*{alg*}{Algorithm}
\newtheorem*{defn*}{Definition}
\newtheorem*{prop*}{Proposition}
\newtheorem*{lem*}{Lemma}
\newtheorem*{thm*}{Theorem}

\newcommand{\btikz}{\begin{tikzpicture}}
\newcommand{\etikz}{\end{tikzpicture}}
\newcommand{\CC}{\mathcal{C}}
\newcommand{\DD}{\mathcal{D}}
\newcommand{\ZZ}{\mathbb{Z}}
\newcommand{\ta}{\tau}

\newcommand{\un}{\underline}
\DeclareMathOperator{\Aut}{Aut}
\DeclareMathOperator{\Irr}{Irr}
\DeclareMathOperator{\id}{id}
\DeclareMathOperator{\BrPic}{BrPic}
\DeclareMathOperator{\Fib}{Fib}
\DeclareMathOperator{\Ising}{Ising}
\DeclareMathOperator{\Obj}{Obj}
\makeatletter
\newcommand{\thickhline}{%
    \noalign {\ifnum 0=`}\fi \hrule height 2pt
    \futurelet \reserved@a \@xhline
}
\begin{document}
\title{Fusion rules for permutation extensions of modular tensor categories}
\author{Colleen Delaney}

\maketitle

\begin{abstract}We give a construction and algorithmic description of the fusion ring of permutation extensions of an arbitrary modular tensor category using a combinatorial approach inspired by the physics of anyons and symmetry defects in bosonic topological phases of matter. The definition is illustrated with examples, namely bilayer symmetry defects and $S_3$-extensions of small modular tensor categories like the Ising and Fibonacci theories. An implementation of the fusion algorithm is provided in the form of a Mathematica package. We introduce the notions of confinement and deconfinement of anyons and defects, respectively, which develop the tools to generalize our approach to more general fusion rings of $G$-crossed extensions. 
\end{abstract}
\section{Introduction}
\label{sec:intro}
Modular tensor categories and their module categories provide an algebraic framework to describe anyons, boundaries, and defects in (2+1)D bosonic topological phases of matter. A special case is when a modular tensor category (MTC) with a $G$-action admits a $G$-crossed graded extension by a family of invertible bimodule categories. 

When $G$ acts not just as a group on the decategorified part of an MTC, namely its fusion ring, but acts on $\CC$ by braided monoidal autoequivalences, then the simple objects in the invertible bimodule categories $\CC_g$ describe point-like ``twist defects'' at the end of invertible domain walls that can be manifest by adding terms to the Hamiltonian of the anyon theory. Depending on whether the $\CC_g$ form a fusion category extending $\CC$, the extension is interpreted as symmetry-enriched topological (SET) order or anomalous SET order. In the latter case, there exists a $G$-crossed braided fusion category which can be interpreted as the algebraic theory of anyons and symmetry defects in (2+1)D bosonic SET order. 

The obstruction theory determining the existence and subsequent classification of $G$xBFCs was given in by Etingof, Nikshych, and Ostrik in \cite{ENO}: the obstructions to lifting a group action on a braided fusion category $\CC$ to a categorical group action and the obstructions to the pentagon axioms needed for the $\CC_g$ to form a $G$-crossed braided fusion category (GxBFC) are measured by cohomology classes in $H^3(G;A)$ and  $H^4(G;U(1))$, respectively. The inequivalent $G$xBFCs then form a $H^2(G;A) \times H^3(G;U(1)$-torsor. 

While \cite{ENO} provides a classification of SET order, it is still worthwhile to give an explicit construction of $G$xBFCs in terms of MTCs and suitable categorical group actions by $G$ both from the point of view of computational physics and abstract quantum algebra. 

A simple but important example is the case of permutation symmetry of multilayer topological order, corresponding to $S_n$-crossed braided extensions of Deligne product MTCs of the form $\CC^{\boxtimes n}$. While its counterpart in conformal field theory (permutation orbifolding) is well understood, and algebraic data for permutation extensions has been described in the language of TQFT, for practitioners of condensed matter theory it is desirable to have a construction purely in terms of MTCs. 

In this paper we develop an elementary approach to modeling the fusion rings of GxBFCs in order to understand the fusion rules satisfied by topological charges and symmetry defects in SET phases of matter and apply it to construct the fusion rings for permutation extensions $\left(\CC^{\boxtimes n}\right)^{\times}_{S_n}$ of MTCs.

We define an $H^2(S_n;A^{\boxtimes n})$-torsor of $S_n$-crossed ring extensions of the fusion ring of a Deligne product category $\CC^{\boxtimes n}$ for arbitrary modular tensor categories $\CC$. The main theorem is an explicit construction of the possible fusion rules for $(\CC^{\boxtimes n})^{\times}_{S_n}$ in terms of the fusion rules for $\CC$ and a choice of 2-cocycle valued in $A^{\boxtimes n}$, the group of abelian anyon types in $\CC^{\boxtimes n}$. 

\begin{thm*}[Fusion rings of permutation extensions of MTCs]  The fusion rings of  $S_n$-crossed braided extensions of $\CC^{\boxtimes n}$ are given by the permutation defect fusion rings $(C^{\times}_{S_n},\otimes_{\omega})$, which can be computed from the data $(C, n, \omega)$. 
\end{thm*}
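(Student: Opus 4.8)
The plan is to build $(C^{\times}_{S_n},\otimes_{\omega})$ one layer of structure at a time --- first on a single cycle, then over a disjoint union of cycles, then globally over $S_n$ --- to verify it is a fusion ring carrying the grading of a $G$-crossed extension of the fusion ring of $\CC^{\boxtimes n}$, and finally to match the family obtained as $\omega$ ranges over $2$-cocycles with the $H^{2}(S_n;A^{\boxtimes n})$-torsor of extensions supplied by \cite{ENO}; throughout one uses that the $H^{3}(G;A)$ and $H^{4}(G;U(1))$ obstructions of \cite{ENO} constrain only the categorical action and the associator and $G$-crossed braiding, not the underlying fusion ring, so this fusion ring is defined even when the full GxBFC is not. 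I would begin by organizing the $S_n$-grading by cycle type: for $g=c_{1}\cdots c_{k}\in S_n$ a product of disjoint cycles with supports $O_{1},\dots,O_{k}$ (fixed points counting as $1$-cycles), the blocks $\CC^{\boxtimes O_{i}}$ of $\CC^{\boxtimes n}=\boxtimes_{i}\CC^{\boxtimes O_{i}}$ are only permuted internally, so the $g$-graded component of any $S_n$-crossed extension factors as a Deligne product $\CC_{g}\simeq\boxtimes_{i}\bigl(\CC^{\boxtimes O_{i}}\bigr)_{c_{i}}$ over the cycles. This reduces everything to the single-cycle component $(\CC^{\boxtimes\ell})_{c}$ together with the way $\omega$ twists the gluing of blocks, and identifies the degree-$g$ part of the basis of $C^{\times}_{S_n}$ with assignments of a simple object of $\CC$ to each $g$-orbit, i.e.\ with $\Irr(\CC)^{\{\,g\text{-orbits}\,\}}$.

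Next, the single-cycle case. An $\ell$-cycle $c$ generates $\ZZ_{\ell}\le S_{\ell}$, and I would identify $\Irr\bigl((\CC^{\boxtimes\ell})_{c}\bigr)$ with $\Irr(\CC)$: the defect component of the cyclic permutation extension is the ``diagonal'' invertible bimodule, whose simple objects $\sigma_{a}$ are labelled by $a\in\Irr(\CC)$ with $\mathrm{FPdim}(\sigma_{a})=(\mathrm{FPdim}\,\CC)^{(\ell-1)/2}\,\mathrm{FPdim}(a)$, so that $\sum_{a}\mathrm{FPdim}(\sigma_{a})^{2}=(\mathrm{FPdim}\,\CC)^{\ell}=\mathrm{FPdim}(\CC^{\boxtimes\ell})$ as every graded component of an extension must satisfy. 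The mixed fusion of a defect with a bulk object is the ``fold'' collapsing the $\ell$ layers, $\sigma_{a}\otimes(x_{1}\boxtimes\cdots\boxtimes x_{\ell})=\bigoplus_{b}N^{\,b}_{a,\,x_{1}\cdots x_{\ell}}\,\sigma_{b}$ with $x_{1}\cdots x_{\ell}$ the fusion product taken in $\CC$ (up to orientation conventions), and the pure-defect fusion $\sigma_{a}\otimes\sigma_{b}$, which lies in degree $c^{2}$, is read off by then collapsing $c^{2}$ into its own cycle structure. For $\ell$ odd $c^{2}$ is again an $\ell$-cycle, but for $\ell$ even $c^{2}$ splits into two $(\ell/2)$-cycles and the outcome must be distributed over the two new orbits; this splitting is the ``deconfinement'' phenomenon of the introduction, and the freedom in how abelian charges are shared between the child orbits is precisely what a $2$-cochain valued in $A^{\boxtimes\ell}$ parametrizes. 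A Frobenius--Perron dimension count (in the style of the genon identity $\sum_{x}\mathrm{FPdim}(x)^{2}=\mathrm{FPdim}\,\CC$) pins down the multiplicities.

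Assembling, fusing a $g$-defect with an $h$-defect gives a degree-$gh$ object which I would decompose orbit by orbit: the $\langle g,h\rangle$-orbit partition of $\{1,\dots,n\}$ is a common coarsening of the $g$-, $h$- and $gh$-orbit partitions, so on each $\langle g,h\rangle$-orbit $B$ the relevant layers merge, the local coefficient is a $\CC$-fusion coefficient of the $\CC$-labels sitting in $B$ (spread over the $gh$-orbits of $B$ as in the single-cycle case), and the global coefficient is the product of these over all $\langle g,h\rangle$-orbits times the abelian charges of $\omega(g,h)\in A^{\boxtimes n}$ acting on the output orbits. The unit is $\mathbf 1^{\boxtimes n}$ in degree $e$; structure constants are sums of products of the $N^{c}_{ab}$ of $\CC$, hence lie in $\ZZ_{\ge 0}$; the grading is respected by construction; and associativity of $\otimes_{\omega}$ reduces to associativity of $\otimes$ in $\CC$ together with the $2$-cocycle identity $\omega(g,h)\,\omega(gh,k)={}^{g}\omega(h,k)\,\omega(g,hk)$ --- this is exactly where the cocycle condition enters. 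Cohomologous cocycles give isomorphic rings via the relabelling $\sigma\mapsto z\otimes\sigma$ by the corresponding abelian anyon $z$, so the construction descends to an $H^{2}(S_n;A^{\boxtimes n})$-action; comparing with \cite{ENO} --- where this same group is the torsor of braided extensions with fixed categorical action, and the higher obstructions leave the fusion ring untouched --- yields both that $(C^{\times}_{S_n},\otimes_{\omega})$ is the fusion ring of an $S_n$-crossed braided extension and that every such extension has fusion ring of this form.

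\textbf{Expected main obstacle.} The crux should be the single-cycle analysis together with the bookkeeping of how orbits merge and split under $(g,h)\mapsto gh$: showing that the ``fold, collapse, redistribute by $\omega$'' recipe is well defined, yields the correct Frobenius--Perron dimension of a genuine $G$-crossed extension on \emph{every} graded component, and is associative --- equivalently, that the $A^{\boxtimes n}$-valued $2$-cochain freedom is exactly what is needed to capture all extensions and nothing more. Granting that, multiplicativity of the grading, positivity of the structure constants, and the unit axiom are routine.
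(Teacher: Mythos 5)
Your architecture is broadly sound and several ingredients coincide with the paper's construction: labelling the $g$-sector by $\Irr(\CC)^{\{g\text{-orbits}\}}$ is exactly the paper's fixed-point parametrization, your ``fold'' describing anyon--defect fusion is the confinement map $c_\sigma$, your dimension formula checks out against the paper's examples, and your final step (twist by $\omega$, derive associativity from the cocycle identity, identify cohomologous cocycles with ring isomorphisms, and compare with the $H^2(S_n;A^{\boxtimes n})$-torsor of \cite{ENO}) is the paper's Section 4.5 nearly verbatim. The genuine gap is at the heart of the theorem: the defect--defect multiplicities. The paper \emph{defines} the product of bare defects so that each transposition $(ij)$ annihilated in the product contributes a factor $\bigoplus_{c\in\Irr(\CC)}\cdots c\cdots c^*\cdots$, and then proves the transparency and associativity properties this factor must satisfy. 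You propose instead to \emph{derive} the multiplicities from a Frobenius--Perron dimension count, and that is not enough. Already for $\CC=\Ising$ and $n=2$, the object $1\boxtimes 1\oplus\psi\boxtimes 1\oplus 1\boxtimes\psi\oplus\psi\boxtimes\psi$ has total dimension $4=\DD_\CC^2$ and contains the unit with multiplicity one, yet it is not $X_{11}\otimes X_{11}$; what excludes it is the $S_2$-crossed constraint $Z\otimes(a\boxtimes 1)=(1\boxtimes a)\otimes Z$ (the paper's Proposition on the transposition fusion product), not dimensions. Relatedly, calling the splitting of orbits under $c\mapsto c^2$ ``the deconfinement phenomenon'' and asserting that ``the freedom in how abelian charges are shared between the child orbits is precisely what a $2$-cochain parametrizes'' conflates two different things: in the paper deconfinement is the (choice-independent) device of stripping charge off a defect, the untwisted product is canonically determined once the crossed structure is imposed, and the $H^2$ freedom is a uniform modification of every product by $\omega(g,h)\in A^{\boxtimes n}$, not a local ambiguity in the splitting.

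Two further points. To conclude that your rings are fusion rings \emph{of $S_n$-crossed braided extensions} you need such extensions to exist, which requires the vanishing of the $H^4(S_n;U(1))$ obstruction --- the Gannon--Jones input \cite{GJ} that the paper invokes explicitly; your remark that the higher obstructions ``leave the fusion ring untouched'' yields a candidate ring but not a braided extension realizing it. And the associativity you defer as ``routine'' is precisely where the paper does its real work: it reduces to bare transposition defects (Lemma on bare transposition defect associativity) and then runs a case analysis on how the supports of $\tau_1,\tau_2,\tau_3$ overlap, plus a separate argument for mixed anyon--defect--defect products using confinement identities. Your $\langle g,h\rangle$-orbit bookkeeping is a reasonable alternative organizing principle and reproduces the paper's answers in the cases I checked (e.g.\ it gives $X^{(123)}_{\vec 1}\otimes X^{(132)}_{\vec 1}=\bigoplus_{\vec x}N^1_{x_1x_2x_3}\,\vec x$, matching the trilayer Fibonacci table), but as you yourself flag, it is exactly this step --- well-definedness and associativity of ``fold, collapse, redistribute'' --- that remains unproved in your write-up.
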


Our construction gives a short algorithm to compute the fusion rules.   \begin{alg*}[Permutation defect fusion algorithm]
  \label{alg:fusion}
  The fusion product of two permutation defects $X^{\sigma}_{\vec{a}}$ and $X^{\tau}_{\vec{b}}$ can be computed as follows. 
  \begin{enumerate}
  \item $\sigma$- and $\tau$-deconfinement: \\ Strip the topological charges from the defects and twist with the abelian anyon $\omega(\sigma,\tau)$.
  \item Transposition defect annihilation:\\ Compute the fusion product of the bare $\sigma$- and $\tau$-defects, for every pair of indices $(ij)$ permuted by both $\sigma$ and $\tau$, pulling out a factor of $$\bigoplus_{c \in \Irr(\CC)} \cdots \underbrace{c}_{i} \cdots \underbrace{c^*}_{j} \cdots.$$
  
  \item $\sigma\tau$-confinement:\\ Confine the product of the objects from Step 1 and Step 2 with the bare $\sigma\tau$-defect.
  \end{enumerate}
  \end{alg*}
  
An implementation in the form of a Mathematica package \textbf{PermutationDefectFusion.m} is provided at the author's website, see Appendix \ref{app:code} for a code sample showing the main function. 

We cast the statements and proofs of our results purely in terms of classical abstract algebra - rings, modules, and group actions - to demonstrate the principle that classical things can be understood through classical means. On the other hand, higher data requires higher data. But the benefit of the topological phase-inspired approach is not just that it is straightforward to construct the fusion rings, it becomes transparent how to categorify the fusion rings, which we discuss in upcoming work \cite{DS} and provides alternate categorical proofs of the results herein.



\subsection{Related work and acknowledgments}
\label{sec:acknowl}
The fusion rules for permutation defects were first understood in terms of modular functors in \cite{BarmeierSchweigert}. The approach taken here was independent and uses the data $(C, n, \omega)$ consisting of a fusion ring $C$ of an MTC $\CC$, a choice of $n$, and a choice of $2$-cocycle. 

In contemporaneous work \cite{BJ}, Bischoff and Jones give a more general algorithm for fusion rules for spherical $G$-crossed braided extensions of arbitrary fusion categories $\CC$ using different methods. They apply it to derive formulas for the fusion rules of maximal cyclic subextensions $\left ( \CC^{\boxtimes n} \right)^{\times}_{\ZZ/n\ZZ}$ of permutations $\left ( \CC^{\boxtimes n} \right)^{\times}_{S_n}$, giving an alternate description to the one that follows from our Section 4. 

The author thanks Christoph Schweigert for bringing his and his collaborators' work to their attention and to Corey Jones and Marcel Bischoff for coordinating the publication of related results. Thanks as well to Eric Samperton for helpful advice with an earlier version of this manuscript. The perspective developed here was heavily influenced by the author's time at Microsoft Station Q and thanks Zhenghan Wang, Mike Freedman, and Parsa Bonderson.
 
 \subsection{Overview of contents}
 \label{sec:overview}
We begin in Section \ref{sec:prelim} with a brief review of the algebraic theory of anyons and symmetry defects and establish the notions needed to work with them at the level of their fusion rings. In Section \ref{sec:theory} we introduce the terminology and tools like $g$-confinement and $g$-deconfinement that are then applied in Section \ref{sec:permdefring} to construct the permutation defect fusion ring, i.e. the fusion ring of the GxBFCs $\left ( \CC^{\boxtimes n} \right)^{\times}_{S_n}$. Examples when $n=2$ and $n=3$ are given in Section 5 to demonstrate the defect fusion algorithm and illustrate the main features of the general theory from Section \ref{sec:theory}.

We conclude our discussion in Section 6 with a brief comment on generalizations of our approach for general $G$xBFCs and applications.

\section{Preliminaries}
\label{sec:prelim}
Although our goal is to understand permutation extensions of MTCs as categories, the approach used here to compute the $S_n$-crossed fusion ring doesn't require any higher data. The fusion ring and its physical interpretation can be understood in terms of elementary abstract algebra, requiring only knowledge of the symmetric group $S_n$, $\mathbb{Z}_+$-based rings (in the sense of \cite{EGNO}), and second group cohomology. For this reason we present our results in a ``decategorified'' way and refer the reader to other sources for detailed definitions of MTCs and GxBFCs and their interpretation as algebraic theories of anyons and symmetry defects in (2+1)D topological phases of matter see \cite{BBCW, Dissertation}. 

That being said, we will still use the notation $\otimes$ and $\oplus$ for multiplication and addition in the fusion ring. Especially in Section \ref{sec:permdefring} we abuse notation and write objects to mean their isomorphism class, conflating anyons and defects with their types. 

These choices have the effect of making the proofs of our results elementary, if a bit inelegant. However, the techniques we use here to construct $G$-crossed braided fusion rings readily suggest the form of their categorification, which we construct in an upcoming sequel \cite{DS} and which provides a categorical proof of the following results.

\subsection{Algebraic theory of anyons and topological charge fusion}
\label{sec:anyonfusion}
Although strictly speaking the interpretation of MTCs as topological order is only for unitary MTCs and our results are stated for more general braided fusion categories, we will freely use the physics terminology for UMTCs throughout. 

\begin{defn*} An anyon is a simple object in a unitary modular tensor category $\CC$. An anyon type, or topological charge, is the isomorphism class of an anyon. 
\end{defn*}

For our purposes it will be enough to know that the topological charges form a unital, commutative, based $\ZZ_+$-ring, i.e. a (braided) fusion ring. We will consistently use calligraphic fonts for fusion categories $\CC$ and standard font $C$ to denote fusion rings. Passage from the calligraphic font to standard font $(\CC \to C)$ indicates the appropriate decategorification. For groups like $G$ and $\Aut(C)$ their categorical groups are indicated by $\un{G}$ and $\un{\Aut(C)}$.

\begin{defn*}[Unital based $\ZZ_+$-ring \cite{EGNO}.]
\label{def:fusring}
Let $C$ be a ring which is free as a $\ZZ$-module. A $\ZZ_+$-basis of a $C$ is a set of elements $B=\{b_i\}_{i \in I}$ such that $b_ib_j=\sum_{k\in I} N^k_{ij} b_k$, where $N^k_{ij} \in \ZZ_+$. 

A unital based $\ZZ_+$-ring is a ring with a fixed $\ZZ_+$-basis $B$ such that 
\begin{enumerate}
\item $1 \in B$
\item there exists an involution $i \mapsto i^*$ of $I$ such that the induced map $a \mapsto a^*$ is an anti-involution of $C$ and whenever $b_ib_j = \sum N^{k}_{ij} b_k$, $N^1_{ij} = \begin{cases} 1 & i=j^* \\ 0 & i \ne j^* 
\end{cases}$.
\end{enumerate}
\end{defn*}

Given a basis of topological charges (or defect types, see Definition \ref{def:gxbfr}) equipped with the involution that sends every object to its dual, the data of the fusion ring is encoded by the fusion coefficients $N^{ab}_c$. The abelian topological charges -- those $a \in \mathcal{L}$ such that for all $b \in \mathcal{L}$ there exists a unique $c \in \mathcal{L}$ with $N^{ab}_c \ne 0$ -- form an abelian group under fusion, denoted by $A$.

\begin{defn*}[Group action on fusion ring] 
Let $G$ be a group and $C$ a braided fusion ring with fixed basis. A $G$-action on $C$ is a homomorphism
$$G \longrightarrow \Aut(C)$$
where $\Aut(C)$ is the group of involution-preserving ring isomorphisms of $C$. 
\end{defn*}
A $G$-action on a fusion ring induces the symmetry $N^{ab}_c=N^{g\cdot a, g\cdot b}_{g \cdot c}$ of the fusion coefficients. The obstruction that measures whether a family of $G$-graded $C$-bimodules coming from a group action on $C$ under the isomorphism $\BrPic(C) \cong \Aut(C)$ form a fusion ring is given by a cohomology class in $H^3(G;A)$ \cite{ENO}. 

\subsection{Algebraic theory of symmetry defects $G$-crossed braided fusion rings}
\label{sec:defectfusion}

When the $H^3(G;A)$ obstruction vanishes, the extension fusion ring has the structure of a $G$-crossed braided ($G$-crossed commutative) fusion ring.

\begin{defn*}[G-crossed braided fusion ring]
\label{def:gxbfr} Let $C$ be a fusion ring of an MTC with a $G$-action. A $G$-crossed braided fusion ring $C^{\times}_G$ is a fusion ring which admits a $G$-grading by C-bimodules $C_g$ where $C_{\id}=C$, together with a $G$-action on $C^{\times}_G$ such that 
\begin{equation}
X_g \otimes Y = g \cdot Y \otimes X_g
\end{equation}
for all $X_g \in C_g$ and $Y \in C^{\times}_G$. 

\end{defn*}

An additional $H^4(G;U(1))$ obstruction to the existence of $G$xBFCs measures the failure for the invertible $\CC$-bimodule categories $\CC_g$ to satisfy the pentagon axioms and form a fusion category \cite{ENO}. In other words, it determines whether a $G$-crossed braided ring extension of $C$ lifts to a $G$-crossed braided extension of $\CC$.

\begin{defn} Given a categorical group action of $G$ on an MTC $\CC$, a (group) symmetry defect is a simple object in an invertible $\CC$-bimodule category $\CC_g$ under the equivalence $\un{\BrPic(\CC)}\simeq \un{\Aut^{br}_{\otimes}(\CC)}$. A symmetry defect type is its isomorphism class.
\end{defn}

When the $H^3$ obstruction vanishes but the $H^4$ doesn't, the $G$-graded extension of $\CC$ by bimodule categories is interpreted as \emph{anomalous} $G$-symmetry enriched topological order, in the sense that the fusion cannot be realized by point-like objects in a strictly (2+1)D system, but can potentially be realized as a 2-dimensional slice of some $(N+1)$D system. 
 
In this case the decategorified part of the anomalous $G$-extension is still a fusion ring - just a fusion ring which cannot be lifted to a $G$-crossed braided fusion category. While in the case of permutation extensions the $H^4(G;U(1))$ obstruction does vanish \cite{GJ}, the tools we develop should be applicable to the case of anomalous SET order and thus we keep our discussion slightly more general.

\subsection{Permutation extensions of MTCs}
\label{sec:permextensions}
Let $P$ be the monoidal functor that acts strictly on $\CC^{\boxtimes n}$ by permutations 

$$\begin{array}{cccc} P: &\un{S_n} &\longrightarrow& \un{\Aut_{\otimes}^{br}(\CC^{\boxtimes n})} \\\\
& id & \mapsto& \id: \CC^{\boxtimes n} \to \CC^{\boxtimes n} \\\\
& \sigma & \mapsto& 
\begin{array}{cccc} 
T_{\sigma}:& \CC^{\boxtimes n} &\to & \CC^{\boxtimes n} \\
& \boxtimes_i X_i & \mapsto & \boxtimes_i X_{\sigma(i)} \\
 & \boxtimes_i f_i &\mapsto& \boxtimes_i f_{\sigma(i)}  \end{array}\\
\end{array}$$
so that the tensorators $U_{\sigma}:T_{\sigma}(X \otimes Y) \to T_{\sigma}(X) \otimes T_{\sigma}(Y)$ and compositors $\eta_X: (T_{\rho}\circ T_{\sigma}) (X) \to T_{\rho\sigma}(X)$ are the identity isomorphisms for all $\rho,\sigma \in S_n$ and $X,Y \in \Obj(\CC^{\boxtimes n})$.

For these (untwisted) permutation actions on MTCs it is known that the $H^4(G;U(1))$ obstruction vanishes:

\begin{thm*}[\cite{GJ}]
\label{GJ}
The $H^4(S_n;U(1))$ obstruction to $S_n$-extensions of $\CC^{\boxtimes n}$ vanish for the categorical permutation group action $P: \un{S_n} \to \un{\Aut_{\otimes}^{br}(\CC^{\boxtimes n})}$ and the equivalence classes of $S_n$-extensions form a torsor over $H^3(G;U(1))$. 
\end{thm*}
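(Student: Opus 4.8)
The plan is to run the obstruction theory of \cite{ENO}, exploiting two special features of the permutation setup: the categorical action $P$ is handed to us strictly, and $\CC^{\boxtimes n}$ is a Deligne product of a \emph{modular} category. The $H^3(S_n;A^{\boxtimes n})$ obstruction to lifting the $S_n$-action on the fusion ring to a categorical action vanishes for the cheap reason that we are simply given such a lift: a categorical $S_n$-action on $\CC^{\boxtimes n}$ is exactly a monoidal functor $\un{S_n}\to\un{\Aut^{br}_{\otimes}(\CC^{\boxtimes n})}$, and $P$ is one, indeed a strict one, its tensorators $U_\sigma$ and compositors $\eta_X$ being identities. So by \cite{ENO} there is a well-defined extension obstruction $O_4(P)\in H^4(S_n;U(1))$, and the content of the theorem is that $O_4(P)=0$ --- equivalently, that an $S_n$-crossed braided extension of $\CC^{\boxtimes n}$ lifting $P$ exists.

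Rather than trivialize a $4$-cocycle directly, I would construct the extension. One cannot merely restrict to transpositions: the odd $p$-torsion of $H^4(S_n;U(1))$ is detected on Sylow $p$-subgroups, which are iterated wreath products of $\ZZ/p\ZZ$ generated by $p$-cycles rather than transpositions, so the construction must be uniform over cycle types. Guided by the physical picture, one models the defect sector $(\CC^{\boxtimes n})_{\sigma}$ of a permutation $\sigma$ with cycles $c_1,\dots,c_k$ on the category $\CC^{\boxtimes k}$ --- one tensor factor per cycle --- with the $\CC^{\boxtimes n}$-bimodule structure in which the factors of $\CC^{\boxtimes n}$ at positions inside a cycle $c$ act on the single $\CC$-factor indexed by $c$ through an iterated tensor product threaded by the braiding of $\CC$ (the $\ZZ/|c|\ZZ$-cyclic permutation orbifold of $\CC$ realized inside $\CC$). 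The building block is thus the cyclic case of $\ZZ/m\ZZ$ acting on $\CC^{\boxtimes m}$; there one checks that the defect sector is an invertible $\CC^{\boxtimes m}$-bimodule category, that relative tensor products compose, and that the associativity module-natural-isomorphisms on triple products --- assembled position by position from the associator and braiding of $\CC$ --- satisfy the pentagon, because the pentagon and hexagon axioms of $\CC$ supply exactly the needed identities. General $\sigma$ is glued from its cycles, and the products $\CC_{\sigma}\boxtimes_{\CC^{\boxtimes n}}\CC_{\tau}\simeq\CC_{\sigma\tau}$ are built factor by factor over the $n$ positions.

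The upshot is that any cocycle representing $O_4(P)$ is assembled from ``pentagon defects'' of $\CC$ at each position $i\in\{1,\dots,n\}$, each of which vanishes by coherence in $\CC$. The real work --- and, I expect, the main obstacle --- is the bookkeeping of braidings as cycle structures merge and split under multiplication (two transpositions with overlapping support composing to a $3$-cycle, say) and the verification that the chosen bimodule equivalences are coherent across all such reorganizations: local consistency on each cyclic subgroup and each Young subgroup is a direct computation inside $\CC$, but patching these into a single $3$-cochain trivializing $O_4(P)$ over all of $S_n$ requires controlling the overlaps dictated by the Coxeter and cycle combinatorics. This is where nondegeneracy of the braiding on $\CC$ is essential: it rigidifies the module-category equivalences enough that the two ways of reassociating a triple product of defect sectors become canonically identified.

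Finally, once any such extension $(\CC^{\boxtimes n})^{\times}_{S_n}$ is produced we conclude $O_4(P)=0$, and the torsor statement is then the general classification of \cite{ENO}: with the categorical action $P$ held fixed, the equivalence classes of $S_n$-crossed braided extensions of $\CC^{\boxtimes n}$ are permuted simply transitively by $H^3(S_n;U(1))$.
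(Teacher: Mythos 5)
First, note that the paper does not prove this statement: it is quoted verbatim from \cite{GJ} and used as a black box (its only role here is to guarantee that the $S_n$-crossed fusion \emph{rings} constructed in Section 4 actually lift to $S_n$-crossed braided fusion \emph{categories}). So your proposal can only be measured against the Gannon--Jones argument itself, not against anything in this text.

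Your setup is sound as far as it goes: the $H^3$ obstruction is indeed moot because the categorical lift $P$ is given (strictly), so the content is $O_4(P)=0$ in $H^4(S_n;U(1))$; and your observation that one cannot reduce to transpositions alone, because the odd $p$-torsion of $H^4(S_n;U(1))$ is only detected on Sylow subgroups, which are iterated wreath products of $\ZZ/p\ZZ$, is exactly the right reduction and is in the spirit of \cite{GJ}, whose essential case is the cyclic action of $\ZZ/p\ZZ$ on $\CC^{\boxtimes p}$. The gap is in the base case. You assert that the associators on triple relative tensor products of the proposed bimodule categories satisfy the pentagon ``because the pentagon and hexagon axioms of $\CC$ supply exactly the needed identities,'' and later that nondegeneracy of the braiding ``rigidifies the module-category equivalences enough'' to make the two reassociations canonically identified. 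This proves too much: if coherence in $\CC$ plus modularity automatically trivialized the pentagon for the naive associators, then \emph{every} categorical action of a finite group on an MTC would have vanishing $H^4$ obstruction, and anomalous symmetry-enriched order would not exist. The obstruction is by definition the failure of being able to correct the naive associators by scalars to satisfy the pentagon, and nothing in your sketch computes that failure. The actual vanishing in \cite{GJ} requires a genuine computation of the obstruction class for the cyclic permutation action (where the existence of the cyclic permutation orbifold, with input from modularity beyond mere coherence, is the decisive ingredient), followed by the detection argument you describe. As written, your proof reduces the theorem to ``the bookkeeping works out'' and then asserts, rather than shows, that it does; the torsor statement at the end is fine and is indeed just the classification of \cite{ENO} once a single extension is exhibited.
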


The permutation symmetry models a global unitary on-site symmetry of \emph{multi-layer topological order} given by $n$ layers of topological order $\CC$ \cite{BBCW}. 

$$\begin{tikzpicture}[scale=.5,baseline=-32.5]
\draw[thick,fill=black!20] (.5,.5)--(0,0)--(4,0)--(4.5,.5) node[below] {$\CC$}--(.5,.5);
\draw[thick,fill=black!20] (.5,-.5)--(0,-1)--(4,-1)--(4.5,-.5) node[below] {$\CC$}--(.5,-.5);
\draw (2.25,-1.5) node {\small $\vdots$};
\draw[thick,fill=black!20] (.5,-2.5)--(0,-3)--(4,-3)--(4.5,-2.5) node[below] {$\CC$} --(.5,-2.5);
\draw (-.5,-1.5) node[left] {$S_n \curvearrowright$} ;
\draw[thick, ->] (5,-1.5)--(7,-1.5) node[right] {$ \left ( \CC^{\boxtimes n } \right)^{\times}_{S_n}=\bigoplus_{\sigma \in S_n} \CC_{\sigma}$};
\end{tikzpicture}$$

While we been interpreting the factors in the Deligne product as spatially separated layers, the permutation symmetry is technically on-site because $\CC^{\boxtimes n}$ can also be interpreted a monolayer topological order. This is what allows us to study the spatial symmetry using the techniques developed for on-site symmetries in \cite{BBCW}.

Prior to the development of $G$xBFCs as the algebraic theory of symmetry defects physicists studied permutation defects in (2+1)D TPM under the guise of \emph{genons} \cite{BJQ}, so called because of the way they effectively couple layers of topological phases to create nontrivial topology \cite{BBCW, BJQ}.

The ability of genons to entangle the layers comes from the $S_n$-crossed braiding, whereby exchange with defects transports (monolayer) anyons between layers. 

\begin{figure}[h!]
$$\btikz[scale=.35,yshift=8cm] 
\draw[thick,->] (5,0)node[below] {$\boxtimes_i a_i$}--(0,5) node[above] {$\boxtimes_i a_{\sigma(i)} $};
\fill[white] (2.25,2.25) rectangle (2.75,2.75);
\draw[thick,->] (0,0) node[below] {$X^{\sigma}$}--(5,5) node[above] { $X^{\sigma}$};
\etikz
$$
\end{figure}

\section{Confinement, deconfinement, and fusion rules between anyons and $g$-defects}
\label{sec:theory}
Our model of symmetry defect fusion is based on a parametrization of defect types by fixed points. 
\begin{eqnarray} \Irr(\CC_{g}) &=& \{ X^{g}_{f} \mid f \in \Irr(\CC) \text{ with } g\cdot f \cong f \}.\end{eqnarray} 
One benefit is that when $\sigma=\id$ one recovers the anyons in $\CC=\CC_{\id}$ as the fixed points under the action by the identity element of $G$. Thus we can think of anyons as trivial symmetry defects, although in what follows we reserve the term ``symmetry defect'' to indicate $g\ne \id$. 
It is also worth mentioning that the defects within each sector inherit an ordering from an ordering of $\CC$.

\begin{defn}[Defect charge] Let $X^{g}_{f} \in \CC_g$ be a symmetry defect. Then $f$ is called the topological charge of the defect $X^g_f$. 
\end{defn}
Since the isomorphism class of the monoidal unit $1$ is always fixed by a $G$-action, each sector has a distinguished object $1$ with vacuum charge. Following \cite{BBCW}, we make the following definition.
\begin{defn}[Bare defect] A symmetry defect with vacuum charge $X^{g}_{1}$ is called a bare defect. 
\end{defn}

\subsection{Confined versus deconfined objects in topological phases}
Anyons, being intrinsic quasiparticle excitations corresponding to the ground state of a Hamiltonian which can be moved by local operators without additional energy, are said to be \emph{deconfined}.

On the other hand, point-like symmetry defects are not finite-energy excitations. They are extrinsic in the sense that a Hamiltonian realization of a topological phase enriched with symmetry needs additional terms added in order for it to have excitations which correspond to the symmetry defects \cite{BBCW}. Moreover, the energy needed to spatially separate defects grows differently than it does for anyons and they are said to be \emph{confined} as opposed to deconfined. 

We make the following definitions of $g$-confinement and $g$-deconfinement, borrowing the ideas from condensed matter theory.\footnote{These are at odds with the sense in which deconfinement is used in the context of anyon condensation, which is why we have made the dependence on the group element explicit.}

\subsection{$g$-confinements and $g$-deconfinement of anyons and $g$-defects}
\label{sec:gconfdeconf}
\begin{defn}[$g$-deconfinement]
\label{def:deconf} Let $X^g_f$ be a $g$-defect with charge $f$, and let $d(f) \in \Obj(\CC)$ be any object, not necessarily simple, with the property that
\begin{equation}
d(f) \otimes X^{g}_1\cong X^{g}_f.
\end{equation}
Then we say that $d(f)$ is a deconfinement, or write $d_g(f)$ is a $g$-deconfinement of the defect $X^{g}_f$. 
\end{defn}
In general a defect has multiple deconfinements.  

\begin{defn}[$g$-confinement]
\label{def:conf}
Let $a \in \Irr(\CC)$, $X^g_1 \in \Irr(\CC_g)$ a bare defect, and define \begin{equation}c(a) = \otimes_{k=1}^m \, g^k \cdot a \end{equation}
where $|g|=m$.  
Then the defect $X^g_{c(a)}$ is called the confinement of $a$. 
\end{defn}
Observe that $g\cdot c(a) \cong c(a)$, so that confinement is well-defined.

\section{Fusion rules for permutation defects}
\label{sec:permdefring}
Now we apply the ideas introduced in the previous section to construct $G$-crossed braided fusion rings directly in the case of $G=S_n$ acting on Deligne product MTCs $\CC^{\boxtimes n}$. 
The definition of the permutation defect fusion ring $(C^{\boxtimes n})^{\times}_{S_n}$ proceeds by specifying a free $\ZZ$-module on a basis of permutation defect types and a binary operation $\otimes$ that gives it the structure of a ``$G$-crossed commutative'' unital $\ZZ_+$ based ring.
\subsection{Model for multilayer anyons and permutation defects}
\label{sec:model}

We will consistently use $(C^{\boxtimes n},\otimes)$ to mean the fusion ring of $\CC^{\boxtimes n}$ with respect to a basis $$ \Irr(\CC^{\boxtimes n})=\{\boxtimes_i a_i | a_i \in \Irr(\CC), 1 \le i \le n\}.$$ 

Since the rank of each $\sigma$-sector in an $S_n$-extension is given by the number of fixed anyons under the action of $\sigma$, see for example \cite{Bischoff2018}, one can index the isomorphism classes of simple objects in $\CC_{\sigma}$ by 
\begin{eqnarray} \Irr(\CC_{\sigma}) &=& \{ X^{\sigma}_{\boxtimes_i a_i} \mid\boxtimes a_i \in \Irr(\CC^{\boxtimes n}) \text{ with } \boxtimes_i a_{\sigma(i)}\cong\boxtimes_i a_i \}.\end{eqnarray} 
Occasionally we use $\vec{a}:=\boxtimes_i a_i$ or supress the Deligne product and write $a_i$ to simplify notation. 

It will soon become apparent that this parametrization of symmetry defects is the key to constructing the fusion ring in a group-theoretical way, allowing one to construct the $S_n$-extension of $\CC^{\boxtimes n}$ from $S_2$-extensions of $\CC \boxtimes \CC$ in exactly the same way that $S_n$ is generated by its transpositions.

\begin{defn}{Permutation defect type basis}
Let $(C^{\boxtimes n})^{\times}_{S_n}$ be the free $\ZZ$-module on the set $$\Irr(\CC^{\boxtimes n}) \cup_{\sigma} \Irr(\CC_{\sigma})$$ which we will also write as $\bigcup_{\sigma \in S_n} \{X^{\sigma}_{\vec{a}}, \sigma\cdot\vec{a}=\vec{a}\}$. 
\end{defn}
The following sections endow $C^{\times}_{S_n}$ with the structure of a $G$-crossed braided fusion ring, see Definition \ref{def:gxbfr}.
 
\subsubsection{Overview of construction}

In Section \ref{sec:anydeffusion} we define $C^{\boxtimes n}$-bimodules $C_{\sigma}$ with respect to the bases $\{X^{\sigma}_{\vec{a}} \mid \sigma \cdot \vec{a}=\vec{a}, \vec{a} \in \Irr(\CC^{\boxtimes n}\}$ and in Section 4.3 define an $S_n$-action on the $\ZZ$-module $C^{\times}_{S_n}$ which is free on the basis $$\bigcup_{\sigma \in S_n} \{X^{\sigma}_{\vec{a}}, \sigma\cdot\vec{a}=\vec{a}\}.$$

Section 4.4 defines a multiplication $\otimes$ on $C^{\times}_{S_n}$ and show that this gives it the structure of an $S_n$-graded, $S_n$-crossed braided fusion ring  $C^{\times}_{S_n}=\bigoplus_{\sigma \in S_n} C_{\sigma}$ extending $C^{\boxtimes n}$. We call $(C^{\times}_{S_n},\otimes)$ the permutation defect fusion ring. 

In Section 4.5 we show that this fusion ring can be realized as a basepoint for an $H^2(S_n,A)$-torsor's worth of fusion rings $C^{\times}_{S_n,\omega}$. In the final Section 4.5 we conclude that the fusion rules for permutation extensions of MTCs can be constructed from the data $(N^{ab,c}, n, \omega)$. We give an algorithm to compute the fusion product of permutation defects and an implementation (see Appendix \ref{app:code}).

\subsubsection{Notation and Terminology}

We use standard permutation notation for $S_n$. For an arbitrary permutation $\sigma \in S_n$ we write its disjoint cycle decomposition as $\sigma=\sigma_1\sigma_2\cdots \sigma_s$, where each $\sigma_i$ is a cycle $(i_1 i_2 \cdots i_{m})$. 

By abuse of notation we conflate permutations with the subsets of $\{1,2,\ldots n\}$ that they act nontrivially on, and for example write $\sigma_i \cap \sigma_j=\emptyset$ to mean that the two permutations are disjoint and $i \in \sigma$ ($i \notin \sigma$) to indicate that a given $i \in \{1,2,\ldots n\}$ is (or isn't) permuted nontrivially by the action of $\sigma$. \\

Throughout we will use the physics terminology, whose corresponding mathematical meaning was given in the next table.
\begin{table}[h!]
\label{dict}
\begin{center}
\begin{tabular}{c|c}
\large \textbf{(2+1)D TPM} & \large \textbf{UMTC} \\ 
\hline
multi-layer topological order & Deligne product of UMTCs $\mathcal{C}^{\boxtimes n}$ \\ \hline 
multi-layer SET order & unitary $S_n$xBFC $\left(\mathcal{C}^{\boxtimes n}\right)^{\times}_{S_n} = \bigoplus_{\sigma \in S_n} \mathcal{C}_{\sigma}$ \\ \hline
(multilayer) anyon & $\vec{a}:=\boxtimes_i a_i$, where $a_i \in \Irr(\mathcal{C})$ \\ vacuum & iso. class of tensor unit $1:=1^{\boxtimes n}$\\\hline

monolayer anyon & $1 \boxtimes \cdots \boxtimes  1 \boxtimes a \boxtimes 1 \boxtimes \cdots 1$\\ \hline$g$-sector & invertible $\CC$-bimodule category $\CC_g$\\ \hline

transposition defect & simple object $X^{(ij)}_{\boxtimes_i f_i} \in \mathcal{C}_{(ij)}$\\ \hline

$m$-cycle defect & simple object $X^{(i_1i_2\cdots i_m)}_{\boxtimes_i f_i} \in \mathcal{C}_{(i_1i_2 \cdots i_m)}$ \\ \hline
permutation defect & simple object $X_{\boxtimes_i f_i}^{\sigma} \in \CC_{\sigma}$\\
bare defect & $X^{\sigma}_{1^{\boxtimes n}}$ \\
\end{tabular}
\end{center}
\caption{Mathematical definitions of the physics terminology we will use when discussing permutation-enriched topological order.}
\end{table}
\subsection{Fusion of multilayer anyons and permutation defects}
\label{sec:anydeffusion}
The multiplication $\otimes$ on $C^{\times}_{S_n}$ that we are about to define restricts to a commutative, associative binary operation on $C_{\id}=C^{\boxtimes n}$ and for this reason in any expression involving only products in $C^{\boxtimes n}$ we will freely commute elements and omit parenthesization without comment. 

First we restate Definitions \ref{def:deconf} and \ref{def:conf} in the case of permutation symmetry of $\CC^{\boxtimes n}$. 

\begin{defn}[$\sigma$-confinement]
\label{def:sigmaconf} Let $\vec{a} \in \Irr(\CC^{\boxtimes n})$ and write a disjoint cycle decomposition of $\sigma$ as $\sigma=\prod_j \sigma_j$. Then the $\sigma$-confinement map $$c_{\sigma}: C^{\boxtimes n} \to C^{\boxtimes n}$$ is defined on basis elements by 
\begin{equation}c_{\sigma}(\vec{a}) :=\boxtimes_i c_i \hspace{10pt} \text{  where  }  c_i = \begin{cases} a_i   & i \notin \sigma \\ \bigotimes_{k \in \sigma_j}\,  a_k& i \in \sigma_j \end{cases}
\end{equation}
and extended linearly to $C^{\boxtimes n}$. 
\end{defn}

\begin{prop}
\label{prop:confprop}The confinement map has the following properties.

\begin{enumerate} 
\item $c_{\sigma}(\vec{a} \otimes \vec{b}) = c_{\sigma}(\vec{a}) \otimes c_{\sigma}(\vec{b})$ 
\item $c_{\sigma}(\sigma^k \cdot \vec{a}) = c_{\sigma}(\vec{a})$ \hfill for all $\vec{a}, \vec{b} \in \Irr(\CC^{\boxtimes n})$,  $1 \le k \le |\sigma|$.

\end{enumerate}

\end{prop}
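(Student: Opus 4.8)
The plan is to prove both identities directly from the definition of $c_\sigma$ on basis elements, then invoke linearity. Since $c_\sigma$ is defined on the $\ZZ_+$-basis $\Irr(\CC^{\boxtimes n})$ and extended linearly, it suffices to verify each property on basis vectors; the general case follows because both sides of each equation are $\ZZ$-bilinear (for (1)) or $\ZZ$-linear (for (2)) in their arguments, provided one checks that the bilinear extension is consistent with the fusion product structure.

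For property (1), I would fix $\vec a = \boxtimes_i a_i$ and $\vec b = \boxtimes_i b_i$ and write their product in $C^{\boxtimes n}$ as $\vec a \otimes \vec b = \boxtimes_i (a_i \otimes b_i)$, which in terms of the $\CC$-fusion coefficients expands as $\bigoplus_{\vec c} \big(\prod_i N^{a_i b_i}_{c_i}\big)\, \boxtimes_i c_i$. Applying $c_\sigma$ componentwise and using that $\otimes$ in $\CC$ is associative and commutative, one sees that for $i$ in a cycle $\sigma_j$ the $i$-th component of $c_\sigma(\vec a\otimes\vec b)$ is $\bigotimes_{k\in\sigma_j}(a_k\otimes b_k)$, which equals $\big(\bigotimes_{k\in\sigma_j} a_k\big)\otimes\big(\bigotimes_{k\in\sigma_j} b_k\big)$ — precisely the $i$-th component of $c_\sigma(\vec a)\otimes c_\sigma(\vec b)$ — while for $i\notin\sigma$ both sides give $a_i\otimes b_i$. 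The only mild subtlety is bookkeeping: since $c_\sigma$ lands in $C^{\boxtimes n}$ but its output need not be a single basis element, I should phrase the argument as an identity of elements of $C^{\boxtimes n}$ and expand into the basis only at the end, or more cleanly argue at the level of the ``multiplicativity'' of the map $a\mapsto\bigotimes_{k\in\sigma_j}(\text{relevant factor})$, which is just the statement that a tensor power of the identity functor is monoidal.

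For property (2), the key observation is that the $k$-th power $\sigma^k$ permutes the indices within each cycle $\sigma_j$ cyclically and fixes all indices outside $\sigma$, so $\sigma^k\cdot\vec a = \boxtimes_i a_{\sigma^{-k}(i)}$ has, within the block indexed by $\sigma_j$, the same multiset of factors $\{a_\ell : \ell\in\sigma_j\}$ as $\vec a$ does, merely cyclically relabeled. Since the confinement map forms the unordered tensor product $\bigotimes_{k\in\sigma_j} a_k$ over that block and $\otimes$ in $\CC$ is commutative, permuting the factors leaves the result unchanged; outside $\sigma$ the components are literally unchanged. Hence $c_\sigma(\sigma^k\cdot\vec a)=c_\sigma(\vec a)$ on basis elements, and linearity finishes it. I expect this to be entirely routine; the main (minor) obstacle is just setting up notation so that the index-chasing — which index lands in which cycle block, and how $\sigma^k$ acts on it — is transparent rather than error-prone, which I would handle by fixing a cycle $\sigma_j=(i_1\,i_2\,\cdots\,i_m)$ and writing out $\sigma^k(i_r)=i_{r+k \bmod m}$ explicitly once.
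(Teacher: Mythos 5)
Your argument is correct and is exactly the direct verification the paper intends: its entire proof is ``easy consequences of Definition \ref{def:sigmaconf},'' and your componentwise check of (1) via associativity/commutativity of $\otimes$ in $\CC$ and your cyclic-relabelling check of (2) fill in precisely those consequences. One remark: the issue you dismiss as ``mild bookkeeping'' in (1) is actually the only real content, and your instinct to treat both sides as identities of $n$-tuples of (possibly non-simple) objects of $\CC$ and to expand into the basis only at the very end is not optional. If one instead takes the definition's phrase ``extended linearly to $C^{\boxtimes n}$'' literally --- first expanding $\vec a\otimes\vec b=\bigoplus_{\vec e}N^{\vec a\vec b}_{\vec e}\,\vec e$ in the basis and then applying $c_\sigma$ term by term --- property (1) fails: for $\CC=\Fib$, $n=2$, $\sigma=(12)$, $\vec a=\vec b=\tau\boxtimes\tau$, that reading gives $c_\sigma(\vec a\otimes\vec b)=2(1\boxtimes 1)\oplus(1\boxtimes\tau)\oplus(\tau\boxtimes 1)\oplus 3(\tau\boxtimes\tau)$ while $c_\sigma(\vec a)\otimes c_\sigma(\vec b)=\bigl((1\oplus\tau)\boxtimes(1\oplus\tau)\bigr)^{\otimes 2}$ has coefficient $4$ on $1\boxtimes 1$. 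So the multiplicativity in (1) holds for the componentwise formula $c_i=\bigotimes_{k\in\sigma_j}(a_k\otimes b_k)$ applied to layered presentations, which is the reading the paper uses downstream (e.g.\ in Lemma \ref{lem:deconfprop} and Proposition \ref{prop:bimod}), and your proof is right once that reading is fixed; just state it explicitly rather than as an afterthought.
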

\begin{proof} Easy consequences of Definition \ref{def:sigmaconf}.
\end{proof}
These will be applied often in the proofs that follow. 

\begin{defn}[Permutation sectors]
Let $C_{\sigma}$ be the free $\ZZ$-module on the basis of $\sigma$-defect types $\Irr(\CC_{\sigma})$.
\end{defn}

The confinement map is the main ingredient in extending the fusion between anyons to an action on $\sigma$-defects. 
\begin{defn}[Anyon-defect fusion]
\label{def:conf} Let $\vec{a} \in \Irr(\CC)$ and $X^{\sigma}_{\vec{b}} \in \Irr(\CC_{\sigma})$. Then
we define a binary operation
$$\otimes: C^{\boxtimes n} \times C_{\sigma} \longrightarrow C_{\sigma} $$
on basis elements by
\begin{equation}
\vec{a} \otimes X^{\sigma}_{\vec{b}} := X^{\sigma}_{c_{\sigma}(\vec{a}) \otimes \vec{b}}.
\end{equation}
and identically for $C_{\sigma} \times C^{\boxtimes n} \longrightarrow C_{\sigma}$. 
\end{defn}

\begin{defn}[$\sigma$-deconfinement] 
\label{def:sigmadeconf}Let $X_{\vec{b}} \in \Irr(\CC_{\sigma})$, $\vec{a} \in \Irr(\CC^{\boxtimes n})$, and suppose they satisfy
\begin{eqnarray}
\vec{a} \otimes X^{\sigma}_{\vec{1}} = X^{\sigma}_{\vec{b}}.
\end{eqnarray}

Then we say that $\vec{a}$ is a (left) deconfinement of $X_b$, and define right deconfinements in the analogous way using the right action. 
\end{defn}
When we want to strip the topological charge from a defect by splitting off an anyon but without making a specific choice of deconfinement, we write
\begin{eqnarray}
X_{\vec{a}} &=& d(\vec{a}) \otimes X_{\vec{1}}.
\end{eqnarray}

We will see that the notion of deconfinement is central to our construction, as it allows us to intuit the way that the defect theory is built from the anyon theory.

\begin{lem}
\label{lem:deconfprop} Anyon-defect fusion is independent of choice of deconfinements. In other words, we can write

 \begin{equation} \vec{a} \otimes X^{\sigma}_{\vec{b}} = (\vec{a} \otimes d_{\sigma}(\vec{b})) \otimes X^{\sigma}_1
\end{equation} for any choice of deconfinement $d_{\sigma}(\vec{b})$. 
\end{lem}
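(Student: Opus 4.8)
The plan is to unwind both sides of the claimed identity using Definition~\ref{def:conf} (anyon-defect fusion) and the properties of the confinement map in Proposition~\ref{prop:confprop}. First I would fix a basis element $X^{\sigma}_{\vec{b}}$ and an arbitrary deconfinement $d_{\sigma}(\vec{b})$, so that by Definition~\ref{def:sigmadeconf} we have $d_{\sigma}(\vec{b}) \otimes X^{\sigma}_{\vec{1}} = X^{\sigma}_{\vec{b}}$, which by Definition~\ref{def:conf} unpacks to $X^{\sigma}_{c_{\sigma}(d_{\sigma}(\vec{b})) \otimes \vec{1}} = X^{\sigma}_{\vec{b}}$, i.e. $c_{\sigma}(d_{\sigma}(\vec{b})) = \vec{b}$ (using that $\vec{1}$ is the unit of $C^{\boxtimes n}$). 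This is the key relation characterizing what it means to be a deconfinement: a deconfinement of $X^{\sigma}_{\vec b}$ is exactly an object whose $\sigma$-confinement is $\vec b$.

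Next I would compute the left-hand side directly: $\vec{a} \otimes X^{\sigma}_{\vec{b}} = X^{\sigma}_{c_{\sigma}(\vec{a}) \otimes \vec{b}}$ by Definition~\ref{def:conf}. For the right-hand side, $(\vec{a} \otimes d_{\sigma}(\vec{b})) \otimes X^{\sigma}_{\vec{1}} = X^{\sigma}_{c_{\sigma}(\vec{a} \otimes d_{\sigma}(\vec{b})) \otimes \vec{1}} = X^{\sigma}_{c_{\sigma}(\vec{a} \otimes d_{\sigma}(\vec{b}))}$. By multiplicativity of $c_\sigma$ (Proposition~\ref{prop:confprop}, part 1), $c_{\sigma}(\vec{a} \otimes d_{\sigma}(\vec{b})) = c_{\sigma}(\vec{a}) \otimes c_{\sigma}(d_{\sigma}(\vec{b})) = c_{\sigma}(\vec{a}) \otimes \vec{b}$, where the last equality uses the relation derived in the first step. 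Comparing, both sides equal $X^{\sigma}_{c_{\sigma}(\vec{a}) \otimes \vec{b}}$, so they agree. Extending $\ZZ$-linearly handles non-basis elements, and the right-action case is identical.

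The main subtlety — rather than obstacle — is the bookkeeping around $\otimes$ being used both for multiplication in $C^{\boxtimes n}$ and for the anyon-defect action, and making sure the multiplicativity of $c_\sigma$ is applied to a product taking place in $C^{\boxtimes n}$ (not to the mixed product). One should also be slightly careful that $d_\sigma(\vec b)$ need not be a basis element (a deconfinement can be a non-simple object, i.e. a $\ZZ_+$-combination of basis elements), but since Definition~\ref{def:conf} and Proposition~\ref{prop:confprop} extend linearly, the computation above goes through verbatim for such $d_\sigma(\vec b)$. No genuine difficulty is expected; the content of the lemma is really just the observation that $c_\sigma$ is a ring homomorphism together with the characterization of deconfinements as $c_\sigma$-preimages of $\vec b$.
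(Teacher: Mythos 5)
Your proof is correct and follows essentially the same route as the paper's: both unwind the anyon-defect fusion via the confinement map, use the relation $c_{\sigma}(d_{\sigma}(\vec{b}))=\vec{b}$ characterizing deconfinements, and apply multiplicativity of $c_{\sigma}$ from Proposition~\ref{prop:confprop}. Your extra remarks on linearity for non-simple deconfinements are a welcome clarification but do not change the argument.
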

\begin{proof}

We have 
\begin{eqnarray}
\vec{a} \otimes X^{\sigma}_{\vec{b}}  = &X^{\sigma}_{c_{\sigma}(\vec{a}) \otimes \vec{b}} \\
= &X^{\sigma}_{c_{\sigma}(\vec{a}) \otimes c_{\sigma}(d_{\sigma}(\vec{b}))} 
\end{eqnarray}
by the definitions of confinement/deconfinement, where $d_{\sigma}(\vec{b})$ is any $\sigma$-deconfinement. Then by Proposition \ref{prop:confprop},
\begin{eqnarray}
X^{\sigma}_{c_{\sigma}(\vec{a}) \otimes c_{\sigma}(d_{\sigma}(\vec{b}))}=& X^{\sigma}_{c_{\sigma}(\vec{a} \otimes d_{\sigma}(\vec{b}))}\\
=& (\vec{a} \otimes d_{\sigma}(\vec{b})) \otimes X^{\sigma}_1
\end{eqnarray}

\end{proof}

\begin{examp}[Fusion of monolayer anyons and bare defects]

The fusion between monolayer anyons $1^{\boxtimes i-1} \boxtimes a \boxtimes 1^{\boxtimes n-i}$ and bare defects $X^{\sigma}_{\vec{1}}$ is given by
\begin{eqnarray}
1^{\boxtimes i-1} \boxtimes a \boxtimes 1^{\boxtimes n-i} \otimes X_{\vec{1}} &= X_{\boxtimes b_j} & b_j =\begin{cases} 1 & \sigma(j)=j \\ a & \sigma(j) \ne j \end{cases} .
\end{eqnarray}

For example, when $n=2$, 
\begin{eqnarray} 1 \boxtimes a \otimes X_1 = a \boxtimes 1 \otimes X_1=  X_{aa} \end{eqnarray}
for all $a \in \Irr(\CC)$.

In words, fusing an anyon in the $i^{th}$ layer with the bare defect results is the defect with the topological charge label that has $a$ in every layer which is in the $\sigma$-orbit of $i$. 
\end{examp}
\begin{prop} $C_{\sigma}$ is a $(C^{\boxtimes n},C^{\boxtimes n})$-bimodule with respect to anyon-defect fusion.  
\label{prop:bimod}
\end{prop}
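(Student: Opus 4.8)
The plan is to verify the two bimodule axioms directly from Definition \ref{def:conf}: associativity/compatibility of the left and right $C^{\boxtimes n}$-actions with each other and with multiplication in $C^{\boxtimes n}$, plus unitality. Since $C_\sigma$ is free on $\Irr(\CC_\sigma)$ and $C^{\boxtimes n}$ is free on $\Irr(\CC^{\boxtimes n})$, it suffices to check everything on basis elements and extend $\ZZ$-bilinearly; the definition of $\otimes$ is already given on basis elements, so there is no well-definedness subtlety beyond the observation (made just before Definition \ref{def:sigmaconf} via Proposition \ref{prop:confprop}(2)) that $c_\sigma(\vec a)$ is $\sigma$-fixed so that $X^\sigma_{c_\sigma(\vec a)\otimes \vec b}$ really lies in $C_\sigma$.

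First I would check left-unitality: $\vec 1 \otimes X^\sigma_{\vec b} = X^\sigma_{c_\sigma(\vec 1)\otimes \vec b} = X^\sigma_{\vec 1 \otimes \vec b} = X^\sigma_{\vec b}$, using $c_\sigma(\vec 1) = \vec 1$, which is immediate from Definition \ref{def:sigmaconf} since tensoring copies of the unit in each cycle gives the unit. Next, the key computation is left-action compatibility: for $\vec a, \vec a' \in \Irr(\CC^{\boxtimes n})$ and $X^\sigma_{\vec b} \in \Irr(\CC_\sigma)$, I want
$$\vec a \otimes (\vec a' \otimes X^\sigma_{\vec b}) = (\vec a \otimes \vec a') \otimes X^\sigma_{\vec b}.$$
Unwinding the left side gives $X^\sigma_{c_\sigma(\vec a)\otimes c_\sigma(\vec a')\otimes \vec b}$ and the right side gives $X^\sigma_{c_\sigma(\vec a\otimes\vec a')\otimes \vec b}$, so the identity reduces precisely to Proposition \ref{prop:confprop}(1), $c_\sigma(\vec a\otimes\vec a') = c_\sigma(\vec a)\otimes c_\sigma(\vec a')$, together with associativity and commutativity of $\otimes$ in $C^{\boxtimes n}$. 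The right-action statement is identical with the right version of Definition \ref{def:conf}. Finally, the left-right compatibility $(\vec a \otimes X^\sigma_{\vec b})\otimes \vec a' = \vec a \otimes (X^\sigma_{\vec b}\otimes \vec a')$ follows the same way: both sides equal $X^\sigma_{c_\sigma(\vec a)\otimes \vec b \otimes c_\sigma(\vec a')}$, again using only that multiplication inside $C^{\boxtimes n}$ is commutative and associative so the two confined factors and $\vec b$ can be freely reordered.

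There is essentially no obstacle here — this proposition is a bookkeeping consequence of Proposition \ref{prop:confprop}. The only place one must be slightly careful is keeping track of the fact that $\otimes$ on $C^{\boxtimes n}$ really is commutative and associative (as emphasized at the start of Section \ref{sec:anydeffusion}), so that expressions like $c_\sigma(\vec a)\otimes \vec b \otimes c_\sigma(\vec a')$ are unambiguous; granting that, each axiom collapses to one application of Proposition \ref{prop:confprop}(1) and the freeness of the modules to pass from basis elements to all of $C^{\boxtimes n}$ and $C_\sigma$. I would therefore write the proof as three short displayed computations (one per axiom), each citing Proposition \ref{prop:confprop}(1) and the ring structure of $C^{\boxtimes n}$, and note that bilinearity handles the extension off the bases.
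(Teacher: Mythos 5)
Your proof is correct and follows essentially the same route as the paper's: both reduce each bimodule axiom (unitality plus left, right, and middle associativity) on basis elements to Proposition \ref{prop:confprop}(1) together with commutativity and associativity of $\otimes$ in $C^{\boxtimes n}$, with the paper displaying exactly the left-associativity computation you describe and declaring the other cases similar. Your added remark that $c_\sigma(\vec a)$ is $\sigma$-fixed, so the products genuinely land in $C_\sigma$, is a small point of care the paper leaves implicit.
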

\begin{proof}Distributivity was built in to the definition of the confinement map and fusion, and it is immediate that $\vec{1} \otimes X^{\sigma}_{\vec{a}}=X^{\sigma}_{\vec{a}} \otimes \vec{1}$ for all $\vec{a}=\sigma\cdot\vec{a}$. The only axioms of a bimodule that need to be checked are left, right, and middle associativity. These are any easy consequence of Proposition \ref{prop:confprop} and commutativity of associativity of $\otimes$ in $C^{\boxtimes n}$. For example, 
\begin{eqnarray}
 (\vec{a} \otimes \vec{b}) \otimes X^{\sigma}_{\vec{c}} & = X_{c_{\sigma}(\vec{a} \otimes \vec{b}) \otimes \vec{c}} \\
 & =  X_{c_{\sigma}(\vec{a} )\otimes c_{\sigma}( \vec{b}) \otimes \vec{c}} \\
 & = \vec{a} \otimes X_{c_{\sigma}(\vec{b}) \otimes \vec{c}} \\
 & = \vec{a} \otimes ( \vec{b} \otimes X^{\sigma}_{\vec{c}}) \end{eqnarray}
The other cases are similar.
\end{proof}

\subsection{$S_n$-action on defects} 
So far we have a free $\ZZ$ module $C^{\times}_{S_n}$ which is an $S_n$-graded extension of $C^{\times}$ by bimodules:
$$C^{\times}_{S_n}= \bigoplus_{\sigma \in S_n} C_{\sigma}.$$

\begin{defn} Let $\rho \in S_n$. Then define a map
$$S_n \times C^{\times}_{S_n} \longrightarrow C^{\times}_{S_n}$$
on basis elements by
\begin{equation}
\rho \cdot X^{\sigma}_{\vec{a}}:= X^{\rho\sigma\rho^{-1}}_{\rho \cdot \vec{a}}.
\end{equation}
\end{defn}
One can check this is a well-defined action of $S_n$ on $C^{\times}_{S_n}$ extending the action of $S_n$ on $C^{\boxtimes n}$ due to associativity in $S_n$. 
\begin{prop}[$S_n$-crossed commutativity of topological charge and defect type fusion] 
\label{lem:permcrossed1}
$$X^{\sigma}_{\vec{a}} \otimes \vec{b} = \sigma \cdot \vec{b} \otimes X^{\sigma}_{\vec{a}}$$
\end{prop}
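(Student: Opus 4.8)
The plan is to unwind both sides of the claimed identity using the definition of anyon-defect fusion (Definition \ref{def:conf}) and the $\sigma$-confinement map (Definition \ref{def:sigmaconf}), and reduce everything to the key compatibility
\[
c_{\sigma}(\sigma \cdot \vec{b}) = c_{\sigma}(\vec{b}),
\]
which is exactly Proposition \ref{prop:confprop}(2) with $k=1$. First I would compute the right-hand side: by the definition of anyon-defect fusion applied to the anyon $\sigma \cdot \vec{b}$ and the defect $X^{\sigma}_{\vec{a}}$, we get $\sigma \cdot \vec{b} \otimes X^{\sigma}_{\vec{a}} = X^{\sigma}_{c_{\sigma}(\sigma \cdot \vec{b}) \otimes \vec{a}}$. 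Next I would compute the left-hand side, $X^{\sigma}_{\vec{a}} \otimes \vec{b}$, which by the ``identically for $C_{\sigma} \times C^{\boxtimes n}$'' clause of Definition \ref{def:conf} equals $X^{\sigma}_{\vec{a} \otimes c_{\sigma}(\vec{b})} = X^{\sigma}_{c_{\sigma}(\vec{b}) \otimes \vec{a}}$, using commutativity of $\otimes$ in $C^{\boxtimes n}$. Comparing the two charge labels, the identity holds precisely because $c_{\sigma}(\sigma \cdot \vec{b}) = c_{\sigma}(\vec{b})$.

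The only subtlety worth flagging is that $X^{\sigma}_{\vec{a}} \otimes \vec{b}$ a priori lands in $C_{\sigma}$ (the grading is unchanged, since $\vec{b} \in C_{\id}$), and $\sigma \cdot \vec{b} \otimes X^{\sigma}_{\vec{a}}$ also lands in $C_{\sigma}$; this is consistent with the $S_n$-grading and with the eventual $S_n$-crossed braided structure, where a defect in $C_\sigma$ sends $\vec{b}$ to $\sigma \cdot \vec{b}$ upon braiding past it. I would also note that $\sigma \cdot \vec{b}$ is again a basis element of $\Irr(\CC^{\boxtimes n})$ since the $S_n$-action permutes tensor factors, so the right-hand side is well-defined via Definition \ref{def:conf}. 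Since everything is defined on basis elements and extended bilinearly, and both sides are bilinear in $(\vec{a}, \vec{b})$ — or rather, linear in $\vec{b}$ and the identity is stated for basis defects $X^{\sigma}_{\vec{a}}$ — it suffices to verify it on basis elements, which the above does.

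The main obstacle, such as it is, is purely bookkeeping: making sure the confinement map is applied to the correct argument on each side (it acts on the incoming anyon, not on the defect's existing charge) and invoking Proposition \ref{prop:confprop}(2) with the right value of $k$. There is no real hard step here; the content was front-loaded into the definition of $c_\sigma$ and the verification in Proposition \ref{prop:confprop}. I would present the proof as a short two-line chain of equalities in a single \texttt{eqnarray} or \texttt{align} environment, citing Definition \ref{def:conf}, commutativity in $C^{\boxtimes n}$, and Proposition \ref{prop:confprop}(2), and then remark that linearity extends the identity from basis elements to all of $C^{\boxtimes n}$.
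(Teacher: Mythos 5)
Your proof is correct and follows essentially the same route as the paper's: both sides reduce to comparing the confined charge labels, and the identity comes down to $c_{\sigma}(\sigma\cdot\vec{b}) = c_{\sigma}(\vec{b})$, i.e.\ Proposition \ref{prop:confprop}(2). The only difference is that you apply the definition of anyon--defect fusion directly to both sides, whereas the paper first deconfines $\vec{a}$ via Lemma \ref{lem:deconfprop} and then reconfines using Proposition \ref{prop:confprop}(1); your version is marginally more direct but not a different argument.
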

\begin{proof}
We have 
\begin{eqnarray}
X^{\sigma}_{\vec{a}} \otimes \vec{b} 
 =& (\vec{b} \otimes d_{\sigma}(\vec{a}))  \otimes X^{\sigma}_{\vec{1}}\\
 =& X^{\sigma}_{c_{\sigma}(\vec{b} \otimes d_{\sigma}(a))} \\
 =&  X^{\sigma}_{c_{\sigma}(\vec{b}) \otimes c_{\sigma}(d_{\sigma}(a))} \\
= &  X^{\sigma}_{c_{\sigma}(\sigma\cdot \vec{b}) \otimes \vec{a})}\\
= &\sigma\cdot \vec{b} \otimes  X^{\sigma}_{\vec{ a}}
\end{eqnarray}
By Lemma \ref{lem:deconfprop}, Definition \ref{def:conf}, and parts (1) and (2) of Proposition \ref{prop:confprop}.
\end{proof}

Later we will show that the multiplication on all of $C^{\times}_{S_n}$ is $S_n$-crossed commutative but Lemma \ref{lem:permcrossed1} will be helpful for showing said multiplication is associative. 
\subsection{Fusion of permutation defects}
Finally we are ready to define a product on all of $C^{\times}_{S_n}$. 
\begin{defn}[Defect fusion]
\label{def:defectfusion} Let $X^{\rho}_{\vec{a}}$ and $X^{\sigma}_{\vec{b}}$ be two symmetry defects. Then their (untwisted) product is given by 
\begin{equation}
X^{\rho}_{\vec{a}} \otimes X^{\sigma}_{\vec{b}} := \left( d_{\rho}(\vec{a}) \otimes d_{\sigma}(\vec{b}) \otimes \left ( \bigotimes_{(i_ki_l) \in \rho\sigma} \bigoplus_{c \in \Irr(\CC)}  1^{\boxtimes i_k-1} \boxtimes c \boxtimes 1^{i_l -i_k -1} \boxtimes c^* \boxtimes 1^{n-i_l}  \right) \right) \otimes X^{\rho\sigma}_{\vec{1}}.
\end{equation}
\end{defn}
It will become clear that the choice of ordering of factors in the product and left justification is arbitrary, but we will present our calculations this way consistently.  

The fusion product that results from the annihilation of transposition defects is an object with nice properties that will come in handy later. The next proposition says it can teleport anyons from layer to layer and is invariant under the layer-exchange symmetry:
 Let $\tau=(ij)$ be a transposition and write $$X^{(ij)}_{\vec{1}} \otimes X^{(ij)}_{\vec{1}}= \bigoplus_{c \in \Irr(\CC)}\cdots  \underbrace{c}_{i}  \cdots \underbrace{c^*}_j \cdots$$ as shorthand for the fusion product of bare transposition defects.
\begin{prop}[Properties of the transposition fusion product]
\label{prop:transpositionprod} The following equations hold.
\begin{enumerate}
\item $$\bigoplus_{c \in \Irr(\CC)}\cdots  \underbrace{a \otimes c}_{i}  \cdots \underbrace{c^*}_j \cdots =\bigoplus_{c \in \Irr(\CC)} \cdots   \underbrace{c }_{i} \cdots \underbrace{c^* \otimes a}_j  \cdots$$
for all $a \in \Irr(\CC)$.
\item $$(ij) \cdot \left ( \bigoplus_{c \in \Irr(\CC)}\cdots  \underbrace{c}_{i}  \cdots \underbrace{c^*}_j \cdots\right) =\bigoplus_{c \in \Irr(\CC)}\cdots  \underbrace{c}_{i}  \cdots \underbrace{c^*}_j \cdots$$
\end{enumerate}
\end{prop}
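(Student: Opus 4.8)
The plan is to work entirely in the free $\ZZ$-module $C^{\boxtimes n}$, on its basis $\{\boxtimes_k a_k\}$, using only coefficient comparison together with one reindexing by the duality involution $c\mapsto c^*$ of $\Irr(\CC)$. Write $B_{ij}:=\bigoplus_{c\in\Irr(\CC)}\cdots\underbrace{c}_{i}\cdots\underbrace{c^*}_{j}\cdots$ for the transposition fusion product $X^{(ij)}_{\vec 1}\otimes X^{(ij)}_{\vec 1}$, taking $i<j$ without loss of generality, so that in each summand the layer-$k$ entry is the unit $1$ for $k\ne i,j$.

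For part (1), both sides lie in $C^{\boxtimes n}$, so it suffices to compare the coefficient of an arbitrary basis vector $\boxtimes_k a_k$. Since every summand on either side has layer-$k$ entry $1$ for $k\ne i,j$, this coefficient vanishes unless $a_k=1$ for all such $k$; set $x:=a_i$ and $y:=a_j$. Writing $a\otimes c=\bigoplus_d N^{ac}_d\, d$ for the fusion coefficients of $C$, the coefficient of $\boxtimes_k a_k$ in $\bigoplus_c\cdots(a\otimes c)_i\cdots c^*_j\cdots$ requires $c^*=y$ and then equals $N^{a\,y^*}_{x}$, whereas its coefficient in $\bigoplus_c\cdots c_i\cdots(c^*\otimes a)_j\cdots$ requires $c=x$ and equals $N^{x^*a}_{y}$. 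Hence (1) is exactly the identity $N^{a\,y^*}_{x}=N^{x^*a}_{y}$, which holds for all $x,y,a\in\Irr(\CC)$ because both sides count the multiplicity of $a$ as a summand of $x\otimes y$; formally this is one of the standard symmetries of fusion coefficients in a based $\ZZ_+$-ring with duality, obtained from $N^{ab}_c=N^{a^*c}_b$ and $N^{ab}_c=N^{b^*a^*}_{c^*}$ (see \cite{EGNO}).

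For part (2), the restriction of the $S_n$-action to $C_{\id}=C^{\boxtimes n}$ is the layer-permutation action, under which the transposition $(ij)$ interchanges the entries in layers $i$ and $j$. Therefore $(ij)\cdot B_{ij}=\bigoplus_{c\in\Irr(\CC)}\cdots\underbrace{c^*}_{i}\cdots\underbrace{c}_{j}\cdots$. Reindexing this direct sum along the bijection $c\mapsto c^*$ of $\Irr(\CC)$ replaces each summand $\cdots c^*_i\cdots c_j\cdots$ by $\cdots c_i\cdots c^*_j\cdots$, and hence $(ij)\cdot B_{ij}=B_{ij}$.

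I do not expect a genuine obstacle: the only points needing care are the bookkeeping of which layer carries which charge — harmless in part (2) since $(ij)$ is an involution and the swap is symmetric in $i$ and $j$ — and selecting the correct symmetry of the $N^{ab}_c$ in part (1). One may also phrase (1) as the statement that fusing the anyon $a$ into layer $i$ of $B_{ij}$ gives the same object as fusing it into layer $j$, i.e. the ``teleportation'' of topological charge across the transposition fusion product, but the verification is the same coefficient comparison. These two invariances are precisely what will let the transposition fusion product act as a stable building block in the general permutation defect fusion rule.
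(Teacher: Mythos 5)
Your proof is correct and follows essentially the same route as the paper's: part (1) reduces in both cases to the standard duality symmetry of the fusion coefficients (the paper rewrites the whole sum via $N^{ac}_b=N^{b^*a}_{c^*}$, you compare the coefficient of a single basis vector, which is the same computation), and part (2) is the same relabeling $c\mapsto c^*$ over all of $\Irr(\CC)$. No gaps.
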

\begin{proof}
For (1), one can write
\begin{equation}
\begin{split}
\bigoplus_c  \cdots a \otimes c \cdots c^*  \cdots &= \bigoplus_{b,c}  \cdots N^{ac}_b \, b \cdots c^{*} \cdots \\
& = \bigoplus_{b,c} \cdots N^{b^*a}_{c^*}\,  b \cdots c^*  \cdots\\
&=  \bigoplus_{b,c}   \cdots b \cdots N^{b^*a}_{c^*}\,  c^* \cdots \\
&=  \bigoplus_b   \cdots b \cdots b^{*} \otimes a  \cdots
\end{split}
\end{equation}
Where $b$ and $c$ range over all of $\Irr(\CC)$ and we have used symmetries of the fusion coefficients that come from duality-induced isomorphisms of trivalent Hom spaces in $\CC$, see for example \cite{CBMS,EGNO}. Relabeling gives equation (1). Equation (2) follows immediately from the fact that the sum is over all of $\Irr(\CC)$ and relabeling. 
\end{proof}

The following lemma shows that it suffices to check associativity for bare transposition defects. 

\begin{lem}
\label{lem:permassoc}
Associativity of bare transposition defects implies associativity of all nontrivial permutation defects.
\end{lem}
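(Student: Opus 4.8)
The plan is to reduce the associativity of arbitrary permutation defect fusion to the associativity of bare transposition defects in two stages: first strip off all the topological charges, reducing to the bare case, and then factor an arbitrary permutation into a product of transpositions and induct. Concretely, given three defects $X^{\rho}_{\vec a}, X^{\sigma}_{\vec b}, X^{\tau}_{\vec c}$, I would first use Definition~\ref{def:defectfusion} together with Lemma~\ref{lem:deconfprop} and Proposition~\ref{lem:permcrossed1} to write both $(X^{\rho}_{\vec a} \otimes X^{\sigma}_{\vec b}) \otimes X^{\tau}_{\vec c}$ and $X^{\rho}_{\vec a} \otimes (X^{\sigma}_{\vec b} \otimes X^{\tau}_{\vec c})$ in the form $\bigl(\text{anyon part}\bigr) \otimes X^{\rho\sigma\tau}_{\vec 1}$. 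The $S_n$-crossed commutativity from Proposition~\ref{lem:permcrossed1} lets me commute the deconfinement anyons $d_\rho(\vec a), d_\sigma(\vec b), d_\tau(\vec c)$ past the bare defects at the cost of applying permutations from $S_n$ to them; since multiplication in $C^{\boxtimes n}$ is commutative and associative, the anyon parts coming from the deconfinements agree on both sides once we know the bracketing of the bare-defect transposition-pair factors matches. So the content reduces to: the bare-defect product $X^{\rho}_{\vec 1}\otimes X^{\sigma}_{\vec 1}$ is associative, i.e. the transposition-pair ``teleportation'' factors $\bigoplus_c \cdots c_i \cdots c^*_j \cdots$ produced for pairs $(i_k i_l) \in \rho\sigma$ combine associatively.

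For the bare case, I would write $\rho, \sigma, \tau$ as products of transpositions and argue by induction on the total number of transpositions. The base case is exactly the hypothesis of the lemma. For the inductive step I would use the fact, from Proposition~\ref{prop:transpositionprod}, that the teleportation factor is invariant under the relevant layer-exchange and can absorb anyons on either leg; combined with the fact that any expression $X^{(i_1 j_1)}_{\vec 1} \otimes X^{(i_2 j_2)}_{\vec 1} \otimes \cdots$ can be reassociated one transposition at a time, this shows that an arbitrary iterated bare product is independent of bracketing provided each adjacent pairwise reassociation — which involves at most three transpositions after collecting disjoint cycles — holds. The technical point here is that when cycles of $\rho$ and $\sigma$ share indices, the set of transposed pairs $(i_k i_l) \in \rho\sigma$ depends on the product permutation, not the factors, so I must check that the bookkeeping of which teleportation factors appear is consistent with grouping transpositions either as $(\rho\sigma)\tau$ or $\rho(\sigma\tau)$; this is where Proposition~\ref{prop:transpositionprod}(1) (pulling anyons through a teleportation factor) and (2) (layer-exchange invariance) do the real work, letting me match up the two sides factor by factor.

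The main obstacle I anticipate is the combinatorial bookkeeping in the overlapping-cycle case: when $\rho$, $\sigma$, $\tau$ do not act on disjoint index sets, the ``transposition defect annihilation'' step of the fusion algorithm produces teleportation factors indexed by pairs permuted by $\rho\sigma$ (resp. $\sigma\tau$, resp. the full product), and one has to verify that the anyons teleported between layers in, say, a left-associated computation can be reorganized — using exactly the moves in Proposition~\ref{prop:transpositionprod} — into the layers dictated by a right-associated computation. I expect this to come down to the observation that a transposition in $\rho\sigma\tau$'s cycle structure can always be obtained by a sequence of adjacent transposition multiplications each involving only two or three of the original transpositions, so that global associativity follows formally from the assumed three-transposition (in fact pairwise, by disjointness reductions) case. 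I would state this reduction carefully, verify the disjoint-support case directly (where it is immediate since the bimodule structures on distinct layers commute), and then handle the overlapping case by the inductive transposition-by-transposition argument, citing Proposition~\ref{prop:transpositionprod} at each reassociation.
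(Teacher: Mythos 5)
Your proposal is correct and follows essentially the same route as the paper: strip the topological charges via deconfinement (using independence of the choice of deconfinement and $S_n$-crossed commutativity to collect the anyon parts, which then associate and commute in $C^{\boxtimes n}$), thereby reducing to bare defects, and then reduce arbitrary bare defects to bare transposition defects because transpositions generate $S_n$ and bare defects factor as products of bare transposition defects. The paper performs these two reductions in the opposite order and dispenses with the transposition-generation step in one line, whereas you spell out the inductive reassociation bookkeeping via Proposition~\ref{prop:transpositionprod}; that extra care addresses a step the paper leaves implicit but does not change the argument.
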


\begin{proof}
Suppose
\begin{equation}
(X^{\tau_1}_{\vec{1}} \otimes X^{\tau_2}_{\vec{1}}) \otimes X^{\tau_3}_{\vec{1}} = X^{\tau_1}_{\vec{1}} \otimes ( X^{\tau_2}_{\vec{1}} \otimes X^{\tau_3}_{\vec{1}}).
\end{equation}
Then 
\begin{equation}
\begin{split} (X^{\tau_1}_{\vec{a}} \otimes X^{\tau_2}_{\vec{b}}) \otimes X^{\tau_3}_{\vec{c}} &= \left( d_1(\vec{a}) \otimes d_2(\vec{b}) \otimes \left (X^{\tau_1}_{\vec{1}} \otimes X^{\tau_2}_{\vec{1}}  \right) \right) \otimes \left ( d_3(\vec{c}) \otimes X^{\tau_3}_{\vec{1}} \right) \\
 &= \left( d_1(\vec{a}) \otimes  d_2(\vec{b}) \otimes  d_3(\vec{c}) \right ) \otimes \left( \left (X^{\tau_1}_{\vec{1}} \otimes X^{\tau_2}_{\vec{1}}  \right)  \otimes X^{\tau_3}_{\vec{1}} \right) \\
 &= \left( d_1(\vec{a}) \otimes  d_2(\vec{b}) \otimes  d_3(\vec{c}) \right ) \otimes \left( X^{\tau_1}_{\vec{1}} \otimes\left (X^{\tau_2}_{\vec{1}}   \otimes X^{\tau_3}_{\vec{1}} \right) \right) 
\\
&=  X^{\tau_1}_{\vec{a}} \otimes (X^{\tau_2}_{\vec{b}} \otimes X^{\tau_3}_{\vec{c}} ).
\end{split}
\end{equation}
Since transpositions generate $S_n$ associativity for arbitrary permutation defects follows immediately.

Thus it is enough to check associativity for bare transposition defects. 
\end{proof}

\begin{lem}[Bare transposition defect associativity]
\label{lem:bardefassoc}
\begin{equation}
\label{eq:baredefassoc}
(X^{\tau_1}_{\vec{1}} \otimes X^{\tau_2}_{\vec{1}}) \otimes X^{\tau_3}_{\vec{1}} = X^{\tau_1}_{\vec{1}} \otimes ( X^{\tau_2}_{\vec{1}} \otimes X^{\tau_3}_{\vec{1}}).
\end{equation}
\end{lem}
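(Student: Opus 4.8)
The strategy is to reduce Lemma \ref{lem:bardefassoc} to a bookkeeping exercise about how the transposition-annihilation factors combine, and then to verify the few essentially different cases according to how the supports of $\tau_1$, $\tau_2$, $\tau_3$ overlap. First I would apply Definition \ref{def:defectfusion} three times to each side, keeping careful track of the product $\tau_1\tau_2$ (resp. $\tau_2\tau_3$) that appears after the first fusion and then of $(\tau_1\tau_2)\tau_3 = \tau_1(\tau_2\tau_3)$ after the second. On the left-hand side, the first fusion of $X^{\tau_1}_{\vec 1}\otimes X^{\tau_2}_{\vec 1}$ produces a sum of the ``teleportation'' factors indexed by transpositions $(i_ki_l)\in\tau_1\tau_2$ together with a bare $\tau_1\tau_2$-defect; fusing this with $X^{\tau_3}_{\vec 1}$ then uses anyon-defect fusion (Definition \ref{def:conf}) to push the teleportation factors through as deconfinements, followed by the defect-defect product which contributes the factors indexed by $(i_ki_l)\in (\tau_1\tau_2)\tau_3$. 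Lemma \ref{lem:deconfprop} guarantees the intermediate charges can be treated as arbitrary deconfinements without affecting the result, and Proposition \ref{prop:confprop} lets me freely distribute the confinement maps over the products that accumulate. So both sides end up as $(\text{a product of teleportation-type factors})\otimes X^{\tau_1\tau_2\tau_3}_{\vec 1}$, and it remains to show the two products of factors agree.

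The core of the argument is then casework on the three transpositions. If $\tau_1,\tau_2,\tau_3$ are pairwise disjoint the two sides are trivially equal since nothing interacts. The interesting cases are when the supports share indices: (i) two of them equal (e.g. $\tau_1=\tau_2$), (ii) two share a single point (e.g. $\tau_1=(12)$, $\tau_2=(23)$), and (iii) all three involve overlapping supports on $\{1,2,3\}$ (e.g. $(12),(23),(13)$ or $(12),(13),(23)$), up to relabeling and the disjoint-from-a-third-transposition variants of each. In each case I would compute $\tau_1\tau_2$, then $(\tau_1\tau_2)\tau_3$, list the transpositions in each cycle's decomposition that the formula in Definition \ref{def:defectfusion} ranges over, and compare with the analogous list obtained from $\tau_2\tau_3$ and $\tau_1(\tau_2\tau_3)$. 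The key input that makes the factors match is Proposition \ref{prop:transpositionprod}: part (1) lets me slide topological charges between the two ``legs'' $i$ and $j$ of a teleportation factor, and part (2) lets me symmetrize a factor under exchange of its two legs, so that factors which look different a priori (because the intermediate $2$-cycle defect got reindexed differently on the two sides) are in fact the same element of $C^{\times}_{S_n}$. I would also use $S_n$-crossed commutativity from Proposition \ref{lem:permcrossed1} to move the accumulated anyon charges past the intermediate bare defects so they can be collected on the left as in our chosen presentation.

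I expect the main obstacle to be case (iii), where all three transpositions act on a common triple of indices: here the intermediate permutation $\tau_1\tau_2$ is a $3$-cycle, and fusing a $3$-cycle bare defect with a transposition bare defect forces me to be precise about which single transposition $(i_ki_l)$ lies ``in'' the product $3$-cycle times transposition, and about how Definition \ref{def:defectfusion}'s index ranges interact with the confinement map $c_{\tau_1\tau_2}$ applied to the teleportation factors coming from the first fusion. The bookkeeping is delicate because $c_{\sigma}$ for a $3$-cycle $\sigma$ identifies all three layers, so a factor of the form $\cdots c\cdots c^*\cdots$ supported on two of the three indices gets sent to $\cdots (c\otimes c^*)\cdots(c\otimes c^*)\cdots(c\otimes c^*)\cdots$, and I must check that after also including the factor from the second fusion and summing over all $c\in\Irr(\CC)$ the two sides produce the same multiset of summands. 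I anticipate that Proposition \ref{prop:transpositionprod} together with the observation $\bigoplus_{c}\,c\otimes c^* \cong \bigoplus_{c,d} N^{cc^*}_{d}\, d$ being invariant under the relevant relabelings is exactly what closes this case, but writing it out carefully is where the real work lies; the remaining cases (i) and (ii) are short by comparison.
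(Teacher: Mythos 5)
Your plan follows essentially the same route as the paper's proof: casework on how the supports of $\tau_1,\tau_2,\tau_3$ overlap (pairwise disjoint, all equal, sharing a single index, fully overlapping), reducing each side to a product of teleportation factors confined with the bare $\tau_1\tau_2\tau_3$-defect via Definition \ref{def:defectfusion}, Proposition \ref{prop:confprop}, and Proposition \ref{prop:transpositionprod}. In fact you are more explicit than the paper about the hardest case (three transpositions on a common triple of indices, where the intermediate product is a $3$-cycle), which the paper dismisses with ``the remaining cases are similar''; your identification of that case as the real work is accurate.
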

\begin{proof} There are several cases to check. We start with the border cases. 

(a) If $\tau_1, \tau_2, \tau_3$ are pairwise disjoint, then both sides of Equation \ref{eq:baredefassoc} simplify to $X^{\tau_1\tau_2\tau_3}_{\vec{1}}$ by part (1) of Proposition \ref{prop:disjointdefprod}. 

(b) If $\tau_1=\tau_2=\tau_3=\tau$, then both sides of Equation \ref{eq:baredefassoc} yield $X^{\tau}_{c_{\tau}(\oplus_{a \in \Irr(C)} \cdots a \cdots a^* \cdots)}$ by Definition \ref{def:defectfusion} .

(c) If $|\tau_1 \cap \tau_2 \cap \tau_3|=1$, Definition \ref{def:defectfusion}  gives $X^{\tau_1\tau_2\tau_3}_{\vec{1}}$. 
 
The remaining cases are similar and the details not particularly illuminating.
\end{proof}

\begin{lem} $S_n$-defect type fusion is associative, making $(C^{\times}_{S_n}, \otimes)$ into a ring.
\end{lem}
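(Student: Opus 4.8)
The plan is to reduce everything to results already proved. Since $\otimes$ is defined on the basis $\bigcup_{\sigma} \Irr(\CC_{\sigma})$ and extended $\ZZ$-bilinearly, distributivity over $\oplus$ is automatic, and it suffices to check associativity on triples of basis elements $X^{\rho}_{\vec a}$, $X^{\sigma}_{\vec b}$, $X^{\tau}_{\vec c}$. I would also record at the outset that $1^{\boxtimes n}=X^{\id}_{\vec 1}$ is a two-sided unit: $\vec 1 \otimes X^{\sigma}_{\vec a}=X^{\sigma}_{c_{\sigma}(\vec 1)\otimes \vec a}=X^{\sigma}_{\vec a}$ since $c_{\sigma}(\vec 1)=\vec 1$ by Definition \ref{def:sigmaconf}, and the opposite side follows from $S_n$-crossed commutativity (Proposition \ref{lem:permcrossed1}). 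With the unit, distributivity, and associativity in hand, $(C^{\times}_{S_n},\otimes)$ is a ring.

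For associativity I would split into cases according to how many of $\rho,\sigma,\tau$ equal the identity. If all three are trivial, the claim is associativity in the fusion ring $C^{\boxtimes n}$, which holds since $\CC^{\boxtimes n}$ is a fusion category. If exactly one is nontrivial, the claim is one of the left/right/middle associativity axioms of the $(C^{\boxtimes n},C^{\boxtimes n})$-bimodule $C_{\sigma}$, already verified in Proposition \ref{prop:bimod}. If exactly two are nontrivial, say $\rho,\sigma\ne\id$ and $\tau=\id$ (the remaining subcases being symmetric), I would expand both associations using Definition \ref{def:defectfusion}, use Lemma \ref{lem:deconfprop} to trade charges for deconfinements, and then transport the anyonic factor $\vec c$ across the defects by repeatedly applying Proposition \ref{lem:permcrossed1} and the bimodule identities of Proposition \ref{prop:bimod}; both sides then collapse to the same expression in $C_{\rho\sigma}$, or into the commutative ring $C^{\boxtimes n}$ in the degenerate subcase $\sigma=\rho^{-1}$, where associativity is again clear. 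Finally, if all of $\rho,\sigma,\tau$ are nontrivial, the statement is exactly Lemma \ref{lem:permassoc}, which reduces it to the bare transposition case settled in Lemma \ref{lem:bardefassoc}.

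Assembling the cases gives associativity on all basis triples, and bilinear extension gives associativity of $\otimes$ on $C^{\times}_{S_n}$; together with the unit and distributivity this proves the lemma. I expect the main obstacle to be the two-nontrivial-label case: one must carry the topological charge of the anyon factor around both defects through confinement and the permutation action without losing track of which layers it lands in, and one must treat separately the collapse $\rho\sigma=\id$ in which the defect product is genuinely an anyon. One should also make sure the ``all nontrivial'' invocation of Lemma \ref{lem:permassoc} really covers arbitrary permutations and not merely transpositions, i.e.\ that the passage ``transpositions generate $S_n$'' is legitimate here; since we are entitled to use that lemma as stated, I would simply cite it.
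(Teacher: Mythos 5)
Your case decomposition is the same as the paper's: three anyons, two anyons and one defect (Propositions \ref{prop:bimod} and \ref{lem:permcrossed1}), three defects (Lemmas \ref{lem:permassoc} and \ref{lem:bardefassoc}), and the residual case of one anyon and two defects. The gap is in that last case, which you correctly flag as the main obstacle but then dispatch with ``both sides then collapse to the same expression.'' They do not collapse for free. Expanding $(\vec a\otimes X^{\rho}_{\vec b})\otimes X^{\sigma}_{\vec c}$ via Definition \ref{def:defectfusion} requires a deconfinement of the charge $c_{\rho}(\vec a)\otimes\vec b$, whereas expanding $\vec a\otimes(X^{\rho}_{\vec b}\otimes X^{\sigma}_{\vec c})$ produces the anyon $\vec a\otimes d_{\rho}(\vec b)\otimes d_{\sigma}(\vec c)$; these are genuinely different objects of $C^{\boxtimes n}$ in general, and Lemma \ref{lem:deconfprop} only covers anyon--defect fusion, not the defect--defect product. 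Invoking independence of the choice of deconfinement for Definition \ref{def:defectfusion} itself would be circular here, since that well-definedness is exactly what this computation has to establish. Crossed commutativity and the bimodule axioms move anyons past defects; they do not identify the two deconfinements.

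The paper closes this in two sub-steps that your sketch is missing. When $\rho\sigma\neq\id$ (after reducing to transpositions $\tau_1\neq\tau_2$), it observes that although the two anyons differ, their $\tau_1\tau_2$-confinements agree, which is all that matters once they are fused with $X^{\tau_1\tau_2}_{\vec 1}$; this is checked layer by layer using Proposition \ref{prop:confprop}. When $\rho=\sigma=(ij)$, so the bare defects annihilate into $\bigoplus_{c}\cdots c\cdots c^{*}\cdots$, one needs Proposition \ref{prop:transpositionprod} to slide charges between layers $i$ and $j$, after which the defining equations of the deconfinements ($d_i\otimes d_j=b_i$, and so on) force the $i$th and $j$th entries of the two sides to agree. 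Without these two ingredients the case does not close, so as written your proposal is an outline of the paper's own argument with its hardest step asserted rather than proved.
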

\begin{proof} 
Of course products involving three anyons are associative and associativity of triple-defect products follows from combining Lemmas \ref{lem:permassoc} and \ref{lem:bardefassoc}. By Propositions \ref{prop:bimod} and \ref{lem:permcrossed1}, products involving two anyons and one defect are associative. It remains only to check associativity for products involving one anyon and two defects.

We show
\begin{equation}
\label{eq:twodefprod} (\vec{a} \otimes X^{\rho}_{\vec{b}}) \otimes X^{\sigma}_{\vec{c}} = \vec{a} \otimes (X^{\rho}_{\vec{b}} \otimes X^{\sigma}_{\vec{c}})\end{equation} and claim that the other cases are similar. Note that it suffices to check the case where $\rho=\tau_1$ and $\sigma=\tau_2$ are transpositions, since together with Equation \ref{eq:twodefprod} Lemma \ref{lem:bardefassoc} generates equality between such parenthesizations with arbitrary permutations. We index the confinements and deconfinements using the transposition indices $1,2$ to simplify notation.

On the one hand, we have
\begin{equation}
\begin{split} (\vec{a} \otimes X^{\tau_1}_{\vec{b}}) \otimes X^{\tau_2}_{\vec{c}}= &  X^{\tau_1}_{c_{1}(\vec{a}) \otimes \vec{b}} \otimes X^{\tau_2}_{\vec{c}} \\ 
=& (d_{1}(c_{1}(\vec{a}) \otimes \vec{b}) \otimes d_{_2}(\vec{c})) \otimes (X^{\tau_1}_{\vec{1}} \otimes X^{\tau_2}_{\vec{1}}) \\
=& (d_{1}(c_{1}(\vec{a}) \otimes \vec{b}) \otimes d_{2}(\vec{c})) \otimes \begin{cases} X^{\tau_1\tau_2}_{\vec{1}} & \tau_1 \ne \tau_2 \\
\bigoplus_{c \in \Irr(\CC)} \cdots   \underbrace{c }_{i} \cdots \underbrace{c^* }_j  \cdots & \tau_1=\tau_2=(ij) \end{cases}
\end{split}
\end{equation} 

On the other hand, 
\begin{equation}\begin{split} \vec{a} \otimes (X^{\tau_1}_{\vec{b}} \otimes X^{\tau_2}_{\vec{c}})= &  \vec{a} \otimes \left (\left ( d_{1}(\vec{b}) \otimes d_{2}(\vec{c}) \right ) \otimes \left (X^{\tau_1}_{1} \otimes X^{\tau_2}_{1} \right)\right )\\ 
=& \left ( \vec{a} \otimes  d_{1}(\vec{b})  \otimes d_{2}(\vec{c}) \right) \otimes \begin{cases} 
X^{\tau_1\tau_2}_{\vec{1}} & \tau_1 \ne \tau_2 \\
\bigoplus_{c \in \Irr(\CC)} \cdots   \underbrace{c }_{i} \cdots \underbrace{c^* }_j  \cdots & \tau_1=\tau_2=(ij)
\end{cases}
\end{split}
\end{equation}

Now it is not necessarily the case that 
$d_{1}(c_{1}(\vec{a}) \otimes \vec{b}) \otimes d_{2}(\vec{c})) = \vec{a} \otimes d_{1}(\vec{b}) \otimes d_{2}(\vec{c})$. However, in the case $\tau_1\ne \tau_2$, it suffices to show that their confinements with respect to $\tau_1\tau_2$ are equal. 
 
 If $\tau_1 \cap \tau_2=\emptyset$, 
The confinement of $d_{1}(c_{1}(\vec{a}) \otimes \vec{b}) \otimes d_{2}(\vec{c}))$
is the object $\boxtimes_i X_i$
$$X_i =\begin{cases}  
a_i \otimes b_i \otimes c_i & i \notin \tau_1,\tau_2\\

a_{i_1} \otimes a_{j_1} \otimes b_i \otimes c_{i_1} \otimes c_{j_1} & i \in \tau_1  \\
 a_{j_1} \otimes a_{j_2} \otimes b_{i_2} \otimes b_{j_2} \otimes c_i & i \in \tau_2 \\

\end{cases},$$ which one can check this is the same as the confinement of  $\vec{a} \otimes  d_{1}(\vec{b})  \otimes d_{2}(\vec{c})$. The case where $\tau_1$ and $\tau_2$ multiply to a 3-cycle is similar.

Now let $\tau_1=\tau_2$, and write $\tau=(ij)$. Write
\begin{eqnarray}
d_{\tau}(\vec{b})_{i,j} = d_{i,j}\\
 d_{\tau}(\vec{c})_{i,j} = e_{i,j}\\
 d_{\tau}(c_{\tau}(\vec{a}) \otimes \vec{b})_{i,j}=f_{i,j} 
\end{eqnarray}
where the equations \begin{eqnarray} \label{eqns:deconfs1}d_i \otimes d_j = b_i=b_j \\ \label{eqns:deconfs2}e_i \otimes e_j = c_i = c_j \\ \label{eqns:deconfs3}f_i \otimes f_j = a_i \otimes a_j \otimes b_i = a_i \otimes a_j=b_j\end{eqnarray} are satisfied by Definition \ref{def:deconf}.  

Comparising the products layer-wise indexing by $k$ one has

$$(d_{\tau}(c_{\tau}(\vec{a}) \otimes \vec{b}) \otimes d_{\tau}(\vec{c})) \otimes 
\bigoplus_{c \in \Irr(\CC)} \cdots   \underbrace{c }_{i} \cdots \underbrace{c^* }_j  \cdots$$
and 
 $$\left ( \vec{a} \otimes  d_{\tau}(\vec{b})  \otimes d_{\tau}(\vec{c}) \right) \otimes 
\bigoplus_{c \in \Irr(\CC)} \cdots   \underbrace{c }_{i} \cdots \underbrace{c^* }_j  \cdots.$$

Clearly when $k \ne i,j$ the $k$th entry of both products are equal and hence it suffices to consider the $i$th and $j$th entries, which become 

$$\bigoplus_{c \in \Irr(\CC)} \cdots   \underbrace{ f_i \otimes e_i \otimes c }_{i} \cdots \underbrace{f_j \otimes e_j \otimes c^* }_j  \cdots$$
and 

$$\bigoplus_{c \in \Irr(\CC)} \cdots   \underbrace{a_i \otimes d_i \otimes e_i  \otimes c }_{i} \cdots \underbrace{a_j \otimes d_j \otimes e_j \otimes c^* }_j  \cdot$$

By Proposition \ref{prop:transpositionprod} and commutativity of multiplication in $C_{\id}$, checking whether these two objects are equal can be reduced to checking whether

$$ f_i \otimes f_j \otimes e_i \otimes e_j = a_i \otimes a_j \otimes d_i \otimes d_j \otimes e_i  \otimes e_j$$

Both sides simplify to $a_i \otimes a_j \otimes b_i \otimes c_i$ by Equations \ref{eqns:deconfs1} - \ref{eqns:deconfs3}.

\end{proof}

Together with the confinement map that determines anyon-defect fusion, the following proposition shows that bare transposition defects generate permutation defects in the same way that transpositions generate all of $S_n$. 
\begin{prop}
\label{prop:disjointdefprod}
\begin{enumerate}
\item Products of disjoint bare defects commute and are themselves bare defects.
\begin{eqnarray} X^{\rho}_{\vec{1}} \otimes X^{\sigma}_{\vec{1}} = X^{\rho\sigma}_{\vec{1}} & \text{ when } \rho \cap \sigma = \emptyset \end{eqnarray}
\item Every bare defect $X^{\sigma}_{\vec{1}}$ can be written as a product of bare transposition defects.
\end{enumerate}

\end{prop}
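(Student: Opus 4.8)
I would read both statements straight off Definition \ref{def:defectfusion}, using nothing beyond the fact that $\vec{1}$ is the tensor unit and that the confinement of the vacuum is the vacuum, $c_{\rho}(\vec{1})=\vec{1}$. In particular I would deliberately avoid associativity here, since Proposition \ref{prop:disjointdefprod} is invoked inside the proofs of Lemmas \ref{lem:permassoc} and \ref{lem:bardefassoc}; all the products below will be taken with a fixed left-justified parenthesization and handled by induction.

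For part (1): apply Definition \ref{def:defectfusion} to $X^{\rho}_{\vec{1}}\otimes X^{\sigma}_{\vec{1}}$ with the canonical choice of deconfinements $d_{\rho}(\vec{1})=d_{\sigma}(\vec{1})=\vec{1}$ (valid by Definition \ref{def:sigmadeconf}, since $\vec{1}\otimes X^{\rho}_{\vec{1}}=X^{\rho}_{\vec{1}}$). When $\rho\cap\sigma=\emptyset$ no index is moved by both $\rho$ and $\sigma$, so there is no pair $(i_ki_l)$ permuted by both, the ``transposition annihilation'' factor $\bigotimes_{(i_ki_l)\in\rho\sigma}(\cdots)$ is an empty tensor product equal to $\vec{1}$, and Definition \ref{def:defectfusion} collapses to $X^{\rho}_{\vec{1}}\otimes X^{\sigma}_{\vec{1}}=(\vec{1}\otimes\vec{1}\otimes\vec{1})\otimes X^{\rho\sigma}_{\vec{1}}=\vec{1}\otimes X^{\rho\sigma}_{\vec{1}}=X^{\rho\sigma}_{c_{\rho\sigma}(\vec{1})\otimes\vec{1}}=X^{\rho\sigma}_{\vec{1}}$. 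Running the same computation with $\rho$ and $\sigma$ swapped gives $X^{\sigma}_{\vec{1}}\otimes X^{\rho}_{\vec{1}}=X^{\sigma\rho}_{\vec{1}}$, and $\rho\sigma=\sigma\rho$ in $S_n$ because disjoint permutations commute; this yields both commutativity and the fact that the product is again bare.

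For part (2): first decompose $\sigma$ into disjoint cycles $\sigma=\sigma_1\cdots\sigma_s$. Since $\sigma_1\cdots\sigma_{j-1}$ is disjoint from $\sigma_j$, iterating part (1) (induction on $s$, left-justified) writes $X^{\sigma}_{\vec{1}}$ as a product of the $X^{\sigma_j}_{\vec{1}}$, so it suffices to treat a single $m$-cycle $\sigma_j=(i_1i_2\cdots i_m)$. Write $\sigma_j=(i_1i_2)(i_2i_3)\cdots(i_{m-1}i_m)$ (replacing this by the reversed order if the paper's multiplication convention demands it) and prove by induction on $k$ that the left-justified product $X^{(i_1i_2)}_{\vec{1}}\otimes\cdots\otimes X^{(i_{k-1}i_k)}_{\vec{1}}$ equals $X^{(i_1\cdots i_k)}_{\vec{1}}$: at the inductive step one multiplies $X^{(i_1\cdots i_k)}_{\vec{1}}$ by $X^{(i_ki_{k+1})}_{\vec{1}}$, the index sets $\{i_1,\ldots,i_k\}$ and $\{i_k,i_{k+1}\}$ meet only in $i_k$, so again no pair is permuted by both, and Definition \ref{def:defectfusion} gives $X^{(i_1\cdots i_k)(i_ki_{k+1})}_{\vec{1}}=X^{(i_1\cdots i_{k+1})}_{\vec{1}}$. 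Taking $k=m$ expresses each single-cycle bare defect, hence $X^{\sigma}_{\vec{1}}$, as a product of bare transposition defects; $\sigma=\id$ is the empty product and a transposition is a one-term product.

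The only genuinely delicate point is the bookkeeping: one must check in each product above that the two factors' supports overlap in at most one element, so that the sum over pairs $(i_ki_l)$ in Definition \ref{def:defectfusion} is empty and the annihilation factor is $\vec{1}$, and one must keep the $S_n$ composition convention consistent so that the adjacent-transposition string really telescopes to $(i_1\cdots i_m)$. Neither is a real obstacle; the entire content sits in Definition \ref{def:defectfusion} together with elementary cycle combinatorics.
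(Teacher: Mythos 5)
Your proof is correct, and for part (2) it takes a genuinely different route from the paper. The paper proves part (1) exactly as you do (immediately from Definition \ref{def:defectfusion}), but for part (2) it simply invokes Lemma \ref{lem:bardefassoc} (associativity of bare transposition defect fusion) to justify writing $X^{\sigma}_{\vec{1}}=\bigotimes_i X^{\sigma_i}_{\vec{1}}=\bigotimes_{i,j}X^{\tau^i_j}_{\vec{1}}$ for \emph{any} transposition decomposition of each disjoint cycle. You instead deliberately avoid associativity and handle a fixed left-justified parenthesization by induction, choosing the adjacent-transposition decomposition $(i_1i_2)(i_2i_3)\cdots(i_{m-1}i_m)$ precisely so that each successive pair of supports meets in at most one index and the annihilation factor in Definition \ref{def:defectfusion} is always the empty product. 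Your caution about circularity is reasonable but not strictly necessary: the paper's dependency chain is not circular, since Lemma \ref{lem:bardefassoc} only uses part (1) of the proposition (which is definitional), and part (2) then uses the lemma. What your argument buys is independence from the ordering of the lemmas and a completely self-contained telescoping computation; what the paper's version buys is brevity and the stronger conclusion that \emph{every} transposition decomposition of $\sigma$ realizes $X^{\sigma}_{\vec{1}}$ as the corresponding product, not just the adjacent one. Since the statement only asserts existence of some such expression, both arguments suffice.
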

\begin{proof}
The first is an immediate consequence of Definition \ref{def:defectfusion}. The second follows from Definition \ref{def:defectfusion} and Proposition \ref{lem:bardefassoc} that one can write
\begin{equation}
X^{\sigma}_{\vec{1}}= \bigotimes_{i} X^{\sigma_i}_{\vec{1}} = \bigotimes_{i,j} X^{\tau^i_j}_{\vec{1}}
\end{equation}
where $\sigma_i=\tau^i_1\tau^i_2 \cdots \tau^i_{m_i}$ is any transposition decomposition of the $i$th disjoint cycle in the decomposition of $\sigma$. 
\end{proof}




\subsubsection{$S_n$-crossed braiding}

\begin{lem}
\label{lem:permcross2} The multiplication is $S_n$-crossed
$$X^{\rho}_{\vec{a}} \otimes X^{\sigma}_{\vec{b}}=\rho\cdot X^{\sigma}_{\vec{b}} \otimes X^{\rho}_{\vec{a}}.$$
\end{lem}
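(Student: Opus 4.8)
The plan is to verify the $S_n$-crossed braiding identity $X^{\rho}_{\vec a}\otimes X^{\sigma}_{\vec b}=\rho\cdot X^{\sigma}_{\vec b}\otimes X^{\rho}_{\vec a}$ by first reducing it, via the already-established structure, to the case of bare transposition defects, and then checking that case by hand using Proposition \ref{prop:transpositionprod} and the commutativity of $C_{\id}$. The right-hand side unpacks as $\rho\cdot X^{\sigma}_{\vec b}=X^{\rho\sigma\rho^{-1}}_{\rho\cdot\vec b}$, and note $(\rho\sigma\rho^{-1})\rho=\rho\sigma$, so both sides lie in the sector $C_{\rho\sigma}$ — a necessary sanity check that also tells us the identity is at least gradable-consistent.

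First I would peel off the topological charges. Using Definition \ref{def:defectfusion} and Lemma \ref{lem:deconfprop} (independence of choice of deconfinement), write $X^{\rho}_{\vec a}=d_{\rho}(\vec a)\otimes X^{\rho}_{\vec 1}$ and $X^{\sigma}_{\vec b}=d_{\sigma}(\vec b)\otimes X^{\sigma}_{\vec 1}$, and similarly $\rho\cdot X^{\sigma}_{\vec b}=X^{\rho\sigma\rho^{-1}}_{\rho\cdot\vec b}=d_{\rho\sigma\rho^{-1}}(\rho\cdot\vec b)\otimes X^{\rho\sigma\rho^{-1}}_{\vec 1}$. Using $S_n$-crossed commutativity of anyon-defect fusion (Proposition \ref{lem:permcrossed1}) to move the anyon factors past the bare defects, both sides reduce to: an anyon factor (which I must check agrees on the two sides after the appropriate $\rho$-twist) tensored with a product of bare defects $X^{\rho}_{\vec 1}\otimes X^{\sigma}_{\vec 1}$ on the left and $X^{\rho\sigma\rho^{-1}}_{\vec 1}\otimes X^{\rho}_{\vec 1}$ on the right. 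So the claim splits into (i) the bare-defect identity $X^{\rho}_{\vec 1}\otimes X^{\sigma}_{\vec 1}=X^{\rho\sigma\rho^{-1}}_{\vec 1}\otimes X^{\rho}_{\vec 1}$ and (ii) matching of the accumulated anyon charges under the $\rho$-action. For (i), by Proposition \ref{prop:disjointdefprod} every bare defect is a product of bare transposition defects, and using Lemma \ref{lem:bardefassoc} plus Proposition \ref{prop:disjointdefprod}(1) the whole identity is generated by the case $\rho=(ij)$, $\sigma=(kl)$ transpositions; here one checks the handful of overlap cases ($\rho,\sigma$ disjoint; $\rho=\sigma$; $|\rho\cap\sigma|=1$) directly from Definition \ref{def:defectfusion}, noting that conjugating a transposition relabels its indices exactly as the permutation in $\rho\sigma$ dictates, and invoking Proposition \ref{prop:transpositionprod}(2) to absorb the $\rho$-action on the teleportation factor $\bigoplus_c\cdots c\cdots c^*\cdots$.

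For part (ii), the bookkeeping of anyon labels, the point is that $c_{\rho\sigma}(d_{\rho}(\vec a))$ and $c_{\rho\sigma}(\rho\cdot(\text{stuff}))$ land in the same place because of Proposition \ref{prop:confprop}(2): confinement with respect to $\rho\sigma$ is invariant under the $\langle\rho\sigma\rangle$-action, and the discrepancy between the two sides is precisely such a power acting on a charge, together with symmetries from Proposition \ref{prop:transpositionprod}(1) for the transposition factors. Concretely I would compare the two expressions layer-by-layer, as in the proof of the associativity lemma, reducing to an identity of the shape ``both sides simplify to the same tensor product of $a_i$'s, $b_i$'s, and teleportation factors.''

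The main obstacle I anticipate is part (ii) — the combinatorial matching of the anyon charges, in particular being careful that the $\rho$-action $\vec b\mapsto\rho\cdot\vec b$ on the topological charge of $X^{\sigma}_{\vec b}$ exactly compensates for the relabeling of the cycle structure when $\sigma$ becomes $\rho\sigma\rho^{-1}$, so that after confining with respect to $\rho\sigma$ the two layer-wise products agree. The bare-defect identity (i) is essentially forced once one observes how conjugation relabels transpositions, but writing out all overlap cases cleanly is the tedious-but-routine part; I would state the transposition cases explicitly and assert the general case follows by the generation argument, exactly in the style of Lemmas \ref{lem:permassoc} and \ref{lem:bardefassoc}.
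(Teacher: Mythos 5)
Your overall strategy --- peel off the charges by deconfinement, reduce to a bare-defect identity, and then separately match the accumulated anyon labels --- is the right shape, and your part (i) is essentially what the paper relies on as well. But the justification you give for part (ii) has a genuine gap. You claim the discrepancy between the two sides is ``precisely such a power [of $\rho\sigma$] acting on a charge,'' so that Proposition \ref{prop:confprop}(2) absorbs it. It is not: after deconfining both sides, the left side carries $d_\rho(\vec a)\otimes d_\sigma(\vec b)$ while the right side carries $d_{\rho\sigma\rho^{-1}}(\rho\cdot\vec b)\otimes d_\rho(\vec a)$, so the mismatch is a $\rho$-twist on the $\sigma$-deconfinement, and $\rho$ is in general not a power of $\rho\sigma$ (take $\rho=\sigma=(12)$, where $\rho\sigma=\mathrm{id}$ and $c_{\mathrm{id}}$ is the identity map, so confinement invariance gives you nothing). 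A layer-by-layer check can still be pushed through using Proposition \ref{prop:transpositionprod}(1) for the overlapping transpositions, as you anticipate, but that is the laborious route and the cited proposition alone does not close it.

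The paper's proof dissolves part (ii) with a single observation you are missing: if $d_\sigma(\vec b)$ is a $\sigma$-deconfinement of $X^{\sigma}_{\vec b}$, then $\rho\cdot d_\sigma(\vec b)$ is a $\rho\sigma\rho^{-1}$-deconfinement of $X^{\rho\sigma\rho^{-1}}_{\rho\cdot\vec b}$ (because $c_{\rho\sigma\rho^{-1}}(\rho\cdot\vec x)=\rho\cdot c_\sigma(\vec x)$), and fusion is independent of the choice of deconfinement. Starting from the right-hand side one then writes
$X^{\rho\sigma\rho^{-1}}_{\rho\cdot\vec b}\otimes X^{\rho}_{\vec a}
=\rho\cdot d_\sigma(\vec b)\otimes X^{\rho\sigma\rho^{-1}}_{\vec 1}\otimes X^{\rho}_{\vec a}
=\rho\cdot d_\sigma(\vec b)\otimes X^{\rho}_{\vec a}\otimes X^{\sigma}_{\vec 1}
=X^{\rho}_{\vec a}\otimes d_\sigma(\vec b)\otimes X^{\sigma}_{\vec 1}
=X^{\rho}_{\vec a}\otimes X^{\sigma}_{\vec b}$,
where the third equality is Proposition \ref{lem:permcrossed1} and the second is the bare-charge case $X^{\rho\sigma\rho^{-1}}_{\vec 1}\otimes X^{\rho}_{\vec a}=X^{\rho}_{\vec a}\otimes X^{\sigma}_{\vec 1}$ --- i.e.\ exactly the identity your part (i) isolates, checked from Definition \ref{def:defectfusion} and conjugation-relabeling of transpositions. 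So the content you correctly identified as (i) is the real work in both arguments; replace your part (ii) with the equivariance-of-deconfinement observation and the rest is the already-proven anyon--defect crossed commutativity.
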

\begin{proof}
First observe that if $d_{\sigma}(\vec{b})$ is a $\sigma$-deconfinement of $X^{\sigma}_{\vec{b}}$, then $\rho\cdot d_{\sigma}(\vec{b})$ is a $\rho\sigma\rho^{-1}$-deconfinement of $X^{\rho\sigma\rho^{-1}}_{\rho\cdot \vec{b}}$. Since the fusion products are independent of choice of deconfinements, we have

\begin{equation}
\begin{split}
\rho\cdot X^{\sigma}_{\vec{b}} \otimes X^{\rho}_{\vec{a}} =& X^{\rho\sigma\rho^{-1}}_{\rho\cdot \vec{b}} \otimes X^{\rho}_{\vec{a}} \\
=& d_{\rho\sigma\rho^{-1}}(\rho\cdot \vec{b}) \otimes X^{\rho\sigma\rho}_{\vec{1}} \otimes X^{\rho}_{\vec{a}} \\
=& \rho\cdot d_{\sigma}(\vec{b}) \otimes  X^{\rho\sigma\rho^{-1}}_{\vec{1}} \otimes X^{\rho}_{\vec{a}} \\
=& \rho\cdot d_{\sigma}(\vec{b}) \otimes X^{\rho}_{\vec{a}} \otimes X^{\sigma}_{\vec{1}} \\
=& X^{\rho}_{\vec{a}} \otimes d_{\sigma}(\vec{b}) \otimes X^{\sigma}_{\vec{1}}\\
=& X^{\rho}_{\vec{a}} \otimes X^{\sigma}_{\vec{b}}
\end{split}
\end{equation}
\end{proof}

\begin{thm} 
\label{thm:fusring}The permutation defect fusion ring $(C^{\times}_{S_n}, \otimes)$ is a unital based $\ZZ_+$-ring which is $S_n$-crossed commutative. 
\end{thm}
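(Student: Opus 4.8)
The plan is to verify the defining conditions of a unital based $\ZZ_+$-ring (Definition \ref{def:fusring}) for the structure $(C^{\times}_{S_n},\otimes)$, drawing on the pieces already assembled, and then to record $S_n$-crossed commutativity as an immediate consequence of Lemma \ref{lem:permcross2} and Proposition \ref{lem:permcrossed1}. First I would note that $C^{\times}_{S_n}=\bigoplus_{\sigma} C_{\sigma}$ is by construction free as a $\ZZ$-module on the basis $B = \bigcup_{\sigma\in S_n}\{X^{\sigma}_{\vec a} \mid \sigma\cdot\vec a=\vec a\}$, and that $\otimes$ is a well-defined $\ZZ$-bilinear operation: for anyon-anyon products this is the fusion ring $C^{\boxtimes n}$, for anyon-defect products it is Definition \ref{def:conf} (shown bilinear and associative in Propositions \ref{prop:bimod}, \ref{lem:permcrossed1}), and for defect-defect products it is Definition \ref{def:defectfusion}, whose output is manifestly a $\ZZ_+$-combination of basis elements. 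Associativity was established in the preceding lemma (combining Lemmas \ref{lem:permassoc}, \ref{lem:bardefassoc}, Propositions \ref{prop:bimod}, \ref{lem:permcrossed1}, and the case analysis for one-anyon-two-defect products), so $(C^{\times}_{S_n},\otimes)$ is a ring. The expansion coefficients are nonnegative integers because every product of basis elements is, via the confinement map and the fusion coefficients $N^c_{ab}\in\ZZ_+$, a $\ZZ_+$-combination of the $X^{\sigma}_{\vec a}$; so $B$ is a $\ZZ_+$-basis.

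Next I would check the two axioms of a \emph{unital based} ring. For the unit: the element $1 := 1^{\boxtimes n} = X^{\id}_{\vec 1} \in C_{\id}$ lies in $B$, and it is a two-sided identity — against an anyon this is the unit of $C^{\boxtimes n}$, and against a defect $X^{\sigma}_{\vec b}$ one computes $\vec 1 \otimes X^{\sigma}_{\vec b} = X^{\sigma}_{c_{\sigma}(\vec 1)\otimes \vec b} = X^{\sigma}_{\vec b}$ using $c_{\sigma}(\vec 1)=\vec 1$ (Definition \ref{def:sigmaconf}), and similarly on the right. For the involution: define $(X^{\sigma}_{\vec a})^* := X^{\sigma^{-1}}_{\sigma^{-1}\cdot\vec a^{\,*}}$, where $\vec a^{\,*}=\boxtimes_i a_i^*$ is the dual in $C^{\boxtimes n}$; I would check this is an involution of the index set (using $(\sigma^{-1})^{-1}=\sigma$ and that $\sigma\cdot\vec a=\vec a$ implies $\sigma^{-1}\cdot(\sigma^{-1}\cdot\vec a^*)=\sigma^{-1}\cdot\vec a^*$), that it restricts to the dual on $C^{\boxtimes n}$, and that it is an anti-involution of the ring, i.e. $(X\otimes Y)^* = Y^*\otimes X^*$. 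The anti-multiplicativity is the one computation with genuine content: for two defects one applies $*$ to Definition \ref{def:defectfusion}, uses that $d_{\sigma}(\vec b)^*$ deconfines $(X^{\sigma}_{\vec b})^*$ in the $\sigma^{-1}$-sector, that $(\rho\sigma)^{-1}=\sigma^{-1}\rho^{-1}$, and that the transposition factor $\bigoplus_c 1^{\boxtimes i_k-1}\boxtimes c\boxtimes 1^{\cdots}\boxtimes c^*\boxtimes 1^{\cdots}$ is self-dual up to the relabeling $c\mapsto c^*$ — this is essentially Proposition \ref{prop:transpositionprod}(1) read backwards. Finally I would verify $N^{1}_{X,Y}=1$ iff $Y=X^*$ (and $0$ otherwise): this follows because the bare defect $X^{\sigma\rho}_{\vec 1}$ pulled out in Definition \ref{def:defectfusion} is trivial only when $\sigma\rho=\id$, i.e. $\sigma=\rho^{-1}$, and then the $\vec 1$-component is controlled by the duality axiom in $C^{\boxtimes n}$ together with the fact that the transposition factors each contain $1^{\boxtimes n}$ with multiplicity exactly one.

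For the last clause, $S_n$-crossed commutativity, I would simply invoke Lemma \ref{lem:permcross2} for defect-defect products ($X^{\rho}_{\vec a}\otimes X^{\sigma}_{\vec b} = \rho\cdot X^{\sigma}_{\vec b}\otimes X^{\rho}_{\vec a}$) and Proposition \ref{lem:permcrossed1} for the mixed anyon-defect case ($X^{\sigma}_{\vec a}\otimes\vec b = \sigma\cdot\vec b\otimes X^{\sigma}_{\vec a}$), noting $\CC^{\boxtimes n}=\CC_{\id}$ is honestly commutative; together with the $S_n$-action of Section 4.3 these give the $G$-crossed commutativity relation of Definition \ref{def:gxbfr} on all of $C^{\times}_{S_n}$ by bilinearity.

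The main obstacle is the anti-involution axiom $(X\otimes Y)^*=Y^*\otimes X^*$ for defect-defect products: it requires tracking how $*$ interacts with the choice of deconfinement and with the transposition-annihilation factor in Definition \ref{def:defectfusion}, and confirming that the apparent asymmetry (left-justification, ordering of factors) in that definition — already flagged in the text as arbitrary — genuinely washes out. Everything else is bookkeeping: freeness, $\ZZ_+$-positivity, the unit, and the crossed-commutativity clause are either immediate or already proved, and the $N^1$ condition reduces cleanly to the corresponding axiom in $C^{\boxtimes n}$ once one observes that a trivial bare defect is pulled out precisely when the two permutations are mutually inverse.
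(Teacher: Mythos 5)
Your proposal is correct and follows essentially the same route as the paper: verify freeness and $\ZZ_+$-positivity of the basis, identify the unit, define the involution by $X^{\sigma}_{\vec a}\mapsto X^{\sigma^{-1}}_{\vec a^*}$, reduce the $N^1$ condition to duality of deconfinements via the fact that a trivial bare defect is pulled out only when the permutations are mutually inverse, and invoke Lemma \ref{lem:permcross2} for crossed commutativity. If anything you are more careful than the paper, which asserts the involution without explicitly verifying the anti-multiplicativity $(X\otimes Y)^*=Y^*\otimes X^*$ that you correctly flag as the step with real content.
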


\begin{proof}
 Let $A$ be the free $\ZZ$-module generated by the set $\left \{\Irr \left ( \CC^{\boxtimes n} \right) \bigcup_{\sigma} \Irr \left ( \CC_{\sigma} \right ) \right \}.$ Combining the multiplication coming from the fusion ring of $\CC^{\boxtimes n}$, the confinement map in Definition 5.2, and the definition of bare defect fusion products in Definitions 5.4 and 5.5 defines an associative binary operation $A \times A \longrightarrow A$ that gives $A$ the structure of a ring. 

As a consequence of our definitions the products are all given by non-negative integer linear combinations of basis elements $\{\Irr \left ( \CC^{\boxtimes n} \right) \bigcup_{\sigma} \Irr \left ( \CC_{\sigma} \right )\}$, and the unit $1 \in \Irr(\CC^{\boxtimes n})$ is simple. Therefore we have a unital $\ZZ_+$ ring. 

Now since $a\cong a^{**}$ for all simple objects in an MTC by pivotality, and $(g^{-1})^{-1}=g$ for all $g \in G$, the following map defines an involution on basis elements,
\begin{eqnarray}
\vec{a} \mapsto \vec{a}^* & \vec{a} \in \Irr(\CC^{\boxtimes n}) \\
X^{\sigma}_{\vec{a}} \mapsto X^{\sigma^{-1}}_{\vec{a}*}&  X^{\sigma}_{\vec{a}} \in \Irr(\CC_{\sigma}) 
\end{eqnarray}
which we expand linearly to an involution on all of $A$. Since that $N^{ab}_1=1$ if and only if $a \cong b^*$, it remains only to check that this involution corresponds to duality of defects.

Fix $\sigma \in S_n$, $\vec{a} \in \Irr(\CC^{\boxtimes n})$ and let $X^{\sigma}_{\vec{a}} \in \CC_{\sigma}$, $X^{\tau}_{\vec{b}} \in \CC_{\tau}$. Clearly $N^{X^{\sigma}_{\vec{a}}X^{\tau}_{\vec{b}}}_1=0$ unless $\tau=\sigma^{-1}$. 

Given a transposition decomposition of $\sigma$, $\sigma=\tau_m\cdots t_1$ with $\tau_j=(j_1j_2)$, $j_1 < j_2$, we have
\begin{eqnarray}
\begin{split}
X^{\sigma}_{\vec{a}} \otimes X^{\sigma^{-1}}_{\vec{b}} = \left ( d(\vec{a}) \otimes d(\vec{b}) \right ) \otimes (X^{\sigma}_{\vec{1}} \otimes X^{\sigma^{-1}}_{\vec{1}}) \\
\\= \left (d(\vec{a}) \otimes d(\vec{b}) \right) \otimes \left (\bigotimes_{1 \le j \le m} \left ( \bigoplus_{c \in \Irr(\CC)} 1^{\boxtimes j_1-1} \boxtimes c \boxtimes 1^{j_2-j_1} \boxtimes c^* \boxtimes 1^{n-j_2}  \right) \right)  
\end{split}
\end{eqnarray}  

In the fusion product over $j$ there is one summand of $\vec{1}$, hence $N^{X^{\sigma}_{\vec{a}}X^{\sigma^{-1}}_{\vec{b}}}_1=1$ only if $N^{d(\vec{a}),d(\vec{b})}_1=1$. So we must have $d(\vec{b})\cong d(\vec{a})^*$. In other words, the deconfinements must be dual. 

Now reconfinement with an arbitrary bare defect gives
\begin{eqnarray} X^{\rho}_{\vec{1}} & =  d(\vec{a}) \otimes d(\vec{b}) \otimes X^{\rho}_{\vec{1}} & = X^{\rho}_{\vec{a} \otimes \vec{b}}
\end{eqnarray}
and hence $\vec{b} \cong \vec{a}^*$. Finally, Lemma \ref{lem:permcross2} gives $S_n$-crossed commutativity. 
\end{proof}

\subsection{Twisted fusion of permutation defects}
The fusion ring we have defined forms a basepoint for an $H^2(S_n,A^{\boxtimes n})$-torsor.

\begin{defn}[Twisted defect fusion] 
Let $\omega: S_n \times S_n \to A^{\boxtimes n}$ be a 2-cocycle for the permutation action of $S_n$ on $A^{\boxtimes n}$. Define
\begin{equation} X^{\rho}_{\vec{a}} \otimes_{\omega} X^{\sigma}_{\vec{b}} := \omega(\rho,\sigma) \otimes (X^{\rho}_{\vec{a}} \otimes X^{\sigma}_{\vec{b}} ).
\end{equation}
\end{defn} 

\begin{lem}The twisted fusion product $\otimes_{\omega}$ gives an $S_n$-crossed fusion ring structure on the defects for every 2-cocycle $\omega: S_n \times S_n \to A^{\boxtimes n}$. 
\end{lem}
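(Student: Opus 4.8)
The plan is to check, one axiom at a time, that $(C^{\times}_{S_n},\otimes_{\omega})$ is a unital based $\ZZ_+$-ring carrying an $S_n$-action for which crossed commutativity holds, exploiting that $\otimes_{\omega}$ differs from the untwisted product $\otimes$ only by the invertible abelian anyon $\omega(\rho,\sigma)\in A^{\boxtimes n}\subset C^{\boxtimes n}$. Well-definedness (independence of the choice of deconfinements) is immediate from that of $\otimes$, since $\omega(\rho,\sigma)$ depends only on $\rho$ and $\sigma$. Because $\omega(\rho,\sigma)$ lies in the trivial sector $C_{\id}$ and is invertible, tensoring by it preserves the $S_n$-grading and permutes the basis of each graded piece: by Proposition \ref{prop:confprop}, $\omega(\rho,\sigma)\otimes X^{\rho\sigma}_{\vec{z}}=X^{\rho\sigma}_{c_{\rho\sigma}(\omega(\rho,\sigma))\otimes\vec{z}}$ is again a single basis element (the confinement of an abelian anyon is abelian and $\rho\sigma$-fixed), with inverse operation given by tensoring with $\omega(\rho,\sigma)^{*}$. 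Hence $\otimes_{\omega}$ sends pairs of basis elements to $\ZZ_+$-linear combinations of basis elements and $C^{\times}_{S_n,\omega}=\bigoplus_{\sigma}C_{\sigma}$ remains $S_n$-graded by $C^{\boxtimes n}$-bimodules. We may take $\omega$ normalized, $\omega(\id,\sigma)=\omega(\sigma,\id)=\vec{1}$ (every cohomology class has such a representative), so that $\vec{1}$ is still a two-sided unit.

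The substantive point is associativity. Using the definition of $\otimes_{\omega}$, bilinearity, and associativity of the untwisted product one rewrites $(X^{\rho}_{\vec{a}}\otimes_{\omega}X^{\sigma}_{\vec{b}})\otimes_{\omega}X^{\tau}_{\vec{c}}$ as $\omega(\rho\sigma,\tau)\otimes\omega(\rho,\sigma)\otimes(X^{\rho}_{\vec{a}}\otimes X^{\sigma}_{\vec{b}}\otimes X^{\tau}_{\vec{c}})$ and $X^{\rho}_{\vec{a}}\otimes_{\omega}(X^{\sigma}_{\vec{b}}\otimes_{\omega}X^{\tau}_{\vec{c}})$ as $\omega(\rho,\sigma\tau)\otimes\big(X^{\rho}_{\vec{a}}\otimes(\omega(\sigma,\tau)\otimes(X^{\sigma}_{\vec{b}}\otimes X^{\tau}_{\vec{c}}))\big)$. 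In the second expression the anyon $\omega(\sigma,\tau)$ sits between the $\rho$-defect $X^{\rho}_{\vec{a}}$ and an object of $C_{\sigma\tau}$, so crossed commutativity of topological charge past a defect (Proposition \ref{lem:permcrossed1}) moves it to the left at the cost of applying $\rho$, yielding $\omega(\rho,\sigma\tau)\otimes(\rho\cdot\omega(\sigma,\tau))\otimes(X^{\rho}_{\vec{a}}\otimes X^{\sigma}_{\vec{b}}\otimes X^{\tau}_{\vec{c}})$. Comparing, associativity of $\otimes_{\omega}$ holds exactly when $(\rho\cdot\omega(\sigma,\tau))\otimes\omega(\rho,\sigma\tau)=\omega(\rho\sigma,\tau)\otimes\omega(\rho,\sigma)$ in $A^{\boxtimes n}$, i.e. exactly the $2$-cocycle condition $d\omega=0$ for the permutation action of $S_n$ on $A^{\boxtimes n}$. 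This is the heart of the lemma.

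It remains to supply the $S_n$-action and the duality involution. On anyons the action is unchanged, and normalization of $\omega$ gives $X^{\rho}_{\vec{a}}\otimes_{\omega}\vec{b}=\rho\cdot\vec{b}\otimes_{\omega}X^{\rho}_{\vec{a}}$ directly from Proposition \ref{lem:permcrossed1}. On defects one cannot keep the untwisted action: writing out the desired relation $X^{\rho}_{\vec{a}}\otimes_{\omega}X^{\sigma}_{\vec{b}}=\rho\cdot X^{\sigma}_{\vec{b}}\otimes_{\omega}X^{\rho}_{\vec{a}}$ and applying Lemma \ref{lem:permcross2} to the untwisted product forces the modified action $\rho\cdot_{\omega}X^{\sigma}_{\vec{b}}:=\big(\omega(\rho,\sigma)\otimes\omega(\rho\sigma\rho^{-1},\rho)^{*}\big)\otimes X^{\rho\sigma\rho^{-1}}_{\rho\cdot\vec{b}}$ (the standard symmetry-fractionalization twist; it coincides with the untwisted action whenever $\omega(\rho,\sigma)=\omega(\rho\sigma\rho^{-1},\rho)$). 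That $\cdot_{\omega}$ is a genuine $S_n$-action by involution-preserving ring automorphisms of $(C^{\times}_{S_n},\otimes_{\omega})$, and that it satisfies crossed commutativity against an arbitrary $Z$, again unwind to the cocycle identity together with Lemma \ref{lem:permcross2}. Finally the $\otimes_{\omega}$-dual of $X^{\sigma}_{\vec{a}}$ is the untwisted dual $X^{\sigma^{-1}}_{\vec{a}^{*}}$ tensored with $\omega(\sigma,\sigma^{-1})^{*}$, the unique abelian anyon making $N^{1}_{X^{\sigma}_{\vec{a}},(X^{\sigma}_{\vec{a}})^{*_\omega}}=1$ under $\otimes_{\omega}$, which is well defined since $\omega(\sigma,\sigma^{-1})$ is invertible and the untwisted ring is based $\ZZ_+$ by Theorem \ref{thm:fusring}. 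The only genuine difficulty throughout is bookkeeping: tracking where each abelian anyon $\omega(\cdot,\cdot)$ lies relative to the defects and invoking crossed commutativity whenever a cocycle value must pass a defect — once that is organized, every identity collapses to the single relation $d\omega=0$.
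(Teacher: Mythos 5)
Your central computation is exactly the paper's: both proofs expand the two parenthesizations of a triple twisted product, use $S_n$-crossed commutativity (Proposition \ref{lem:permcrossed1} / Lemma \ref{lem:permcross2}) to slide $\omega(\sigma,\tau)$ leftward past $X^{\rho}_{\vec{a}}$ at the cost of acting by $\rho$, and observe that equality of the two sides is precisely the cocycle identity $\rho\cdot\omega(\sigma,\tau)\otimes\omega(\rho,\sigma\tau)=\omega(\rho\sigma,\tau)\otimes\omega(\rho,\sigma)$. (Your version of that identity is the correct one; the paper's displayed cocycle condition contains a typo, writing $\omega(\rho,\sigma)\otimes\omega(\rho,\sigma)$ where $\omega(\rho\sigma,\tau)\otimes\omega(\rho,\sigma)$ is meant.) Where you genuinely go beyond the paper is in verifying the remaining structure: the paper's proof checks associativity and stops, leaving the unit, the $\ZZ_+$-basedness, duality, and the $S_n$-action implicit. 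Your observations that one should normalize $\omega$ to keep $\vec{1}$ a unit, that tensoring by the confined abelian anyon $c_{\rho\sigma}(\omega(\rho,\sigma))$ permutes the basis of each sector so positivity of the structure constants is preserved, and that the dual of $X^{\sigma}_{\vec{a}}$ acquires a correction by $\omega(\sigma,\sigma^{-1})^{*}$ are all correct and fill real gaps. Most substantively, you notice that crossed commutativity fails for the untwisted $S_n$-action on defects and that one must pass to the twisted action $\rho\cdot_{\omega}X^{\sigma}_{\vec{b}}=\bigl(\omega(\rho,\sigma)\otimes\omega(\rho\sigma\rho^{-1},\rho)^{*}\bigr)\otimes X^{\rho\sigma\rho^{-1}}_{\rho\cdot\vec{b}}$; a short check confirms this makes $X^{\rho}_{\vec{a}}\otimes_{\omega}X^{\sigma}_{\vec{b}}=(\rho\cdot_{\omega}X^{\sigma}_{\vec{b}})\otimes_{\omega}X^{\rho}_{\vec{a}}$ hold, reducing to Lemma \ref{lem:permcross2} after the two cocycle values cancel. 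The paper is silent on this point, so your treatment is strictly more complete; the only cost is that the claims that $\cdot_{\omega}$ is a genuine action and that the general crossed-commutativity and action-compatibility identities "unwind to the cocycle identity" are asserted rather than written out, but they are routine in the same way as the associativity check.
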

\begin{proof}The twisted fusion product $\otimes_{\omega}$ is associative:

On the one hand,
\begin{eqnarray}(X^{\rho}_{\vec{a}} \otimes_{\omega} X^{\sigma}_{\vec{b}}) \otimes_{\omega}  X^{\tau}_{\vec{c}} =& ( \omega(\rho,\sigma) \otimes (X^{\rho}_{\vec{a}} \otimes X^{\sigma}_{\vec{b}} )) \otimes_{\omega}  X^{\tau}_{\vec{c}}\\
=& \omega(\rho,\sigma) \otimes (X^{\rho}_{\vec{a}} \otimes X^{\sigma}_{\vec{b}} )) \otimes_{\omega}  X^{\tau}_{\vec{c}}\\
=& (\omega(\rho,\sigma) \otimes \omega(\rho,\sigma) ) \otimes (X^{\rho}_{\vec{a}} \otimes X^{\sigma}_{\vec{b}}) \otimes X^{\tau}_{\vec{c}}.
\end{eqnarray}
On the other hand, 
\begin{eqnarray}X^{\rho}_{\vec{a}} \otimes_{\omega} (X^{\sigma}_{\vec{b}} \otimes_{\omega}  X^{\tau}_{\vec{c}} ) =&   X^{\rho}_{\vec{a}} \otimes_{\omega}\left ( \omega(\sigma,\tau) \otimes ( X^{\sigma}_{\vec{b}} \otimes  X^{\tau }_{\vec{c}} )\right)\\ =&  \rho \cdot \omega(\sigma,\tau) \otimes \left ( X^{\rho}_{\vec{a}} \otimes_{\omega} ( X^{\sigma}_{\vec{b}} \otimes  X^{\tau }_{\vec{c}} )\right) \\
=& \rho \cdot \omega(\sigma,\tau) \otimes \omega( \rho, \sigma\tau) \otimes \left ( X^{\rho}_{\vec{a}} \otimes ( X^{\sigma}_{\vec{b} }\otimes X^{\tau}_{\vec{c}})\right) 
\end{eqnarray}
where we have used the $S_n$-crossed braiding and associativity of anyon-defect fusion.
Since $\omega$ is a 2-cocycle for the $S_n$-action on $A^{\boxtimes n}$, it satisfies
\begin{equation}
 \omega(\rho,\sigma) \otimes \omega(\rho,\sigma)  = \rho \cdot \omega(\sigma,\tau) \otimes \omega(\rho, \sigma\tau) 
\end{equation} 
\end{proof}

Whenever two 2-cocycles $\omega$ and $\tilde{\omega}$ differ by a 2-coboundary, their corresponding twisted fusion products $\otimes_{\omega}$ and $\otimes_{\tilde{\omega}}$ give rise to isomorphic $S_n$-crossed fusion rings.

\begin{lem} $(C^{\times}_{S_n}, \otimes_{\omega})$ and $(C^{\times}_{S_n}, \otimes_{\tilde{\omega}})$ are isomorphic as $S_n$-crossed fusion rings if and only if $\omega$ and $\tilde{\omega}$ differ by a 2-coboundary. \end{lem}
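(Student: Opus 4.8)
The statement is the classification of twisted permutation defect fusion rings up to $S_n$-crossed fusion ring isomorphism, and it should follow the same pattern as the standard fact that $G$-graded extensions twisted by 2-cocycles are classified by $H^2$. The plan is to prove the two directions separately. For the ``if'' direction, suppose $\omega$ and $\tilde\omega$ differ by a 2-coboundary, so there is a function $\mu: S_n \to A^{\boxtimes n}$ with $\tilde\omega(\rho,\sigma) = \mu(\rho\sigma)^{-1}\otimes \mu(\rho) \otimes \rho\cdot\mu(\sigma) \otimes \omega(\rho,\sigma)$ (up to the conventions already fixed in the excerpt for the cocycle condition). I would then define a $\ZZ$-linear map $\Phi: C^{\times}_{S_n} \to C^{\times}_{S_n}$ on basis elements by $\Phi(X^{\sigma}_{\vec a}) = \mu(\sigma) \otimes X^{\sigma}_{\vec a}$ (and $\Phi(\vec a) = \vec a$ on the identity sector, since $\mu$ can be normalized with $\mu(\id)=\vec 1$). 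One checks: (i) $\Phi$ is a bijection on basis elements, because tensoring with the invertible anyon $\mu(\sigma) \in A^{\boxtimes n}$ permutes $\Irr(\CC_\sigma)$ — this uses that abelian anyons act invertibly by anyon-defect fusion, i.e. $\mu(\sigma) \otimes X^{\sigma}_{\vec a} = X^{\sigma}_{c_\sigma(\mu(\sigma))\otimes \vec a}$ with $c_\sigma(\mu(\sigma))$ again abelian and $\sigma$-fixed; (ii) $\Phi$ preserves the $\ZZ_+$-basis and sends unit to unit; (iii) $\Phi$ intertwines $\otimes_\omega$ and $\otimes_{\tilde\omega}$, which is a direct computation expanding $\Phi(X^{\rho}_{\vec a} \otimes_\omega X^{\sigma}_{\vec b}) = \mu(\rho\sigma)\otimes\omega(\rho,\sigma)\otimes(X^{\rho}_{\vec a}\otimes X^{\sigma}_{\vec b})$ and comparing with $\Phi(X^{\rho}_{\vec a})\otimes_{\tilde\omega}\Phi(X^{\sigma}_{\vec b}) = \tilde\omega(\rho,\sigma)\otimes\mu(\rho)\otimes\rho\cdot\mu(\sigma)\otimes(X^{\rho}_{\vec a}\otimes X^{\sigma}_{\vec b})$, where the crucial move is $\mu(\rho)\otimes X^{\sigma}_{\vec b} = \rho\cdot(\mu(\rho))$-commutation — actually one uses $S_n$-crossed commutativity (Lemma \ref{lem:permcross2}) to slide $\rho\cdot\mu(\sigma)$ past $X^{\rho}_{\vec a}$, turning $\rho\cdot\mu(\sigma)$ into $\mu(\sigma)$ on the other side; the coboundary identity then makes the two sides agree; (iv) $\Phi$ commutes with the $S_n$-action, since $\mu$ is $A^{\boxtimes n}$-valued and the action on anyons is by permutation — one needs $\rho\cdot\mu(\sigma)$ vs $\mu(\rho\sigma\rho^{-1})$, which requires $\mu$ to be chosen compatibly, or more carefully, that $\Phi$ being an $S_n$-crossed fusion ring map only requires compatibility with the grading and the crossed relation, both of which hold because $\mu(\sigma)$ lies in the identity sector and abelian anyons are central up to the crossed braiding.

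For the ``only if'' direction, suppose $\Phi: (C^{\times}_{S_n}, \otimes_\omega) \to (C^{\times}_{S_n}, \otimes_{\tilde\omega})$ is an isomorphism of $S_n$-crossed fusion rings. I would first argue that $\Phi$ restricts to an automorphism of the identity sector $C^{\boxtimes n}$ (it preserves the grading since the $\id$-sector is the only one containing the unit and closed under fusion-with-itself, or more robustly: $\Phi$ sends $C_\sigma$ to $C_\sigma$ because the grading group is recoverable from the fusion ring structure via the ``which products contain the unit'' / support data, and $\Phi$ must be $S_n$-equivariant which pins down sectors). Then $\Phi$ sends bare defects to invertible-anyon multiples of bare defects: $\Phi(X^{\sigma}_{\vec 1}) = \mu(\sigma)\otimes X^{\sigma}_{\vec 1}$ for some $\mu(\sigma)$, because $X^{\sigma}_{\vec 1}$ has the smallest ``quantum dimension'' in its sector (or: the set of simples in $C_\sigma$ obtainable from one another by fusing abelian anyons forms a coset, and bare defects are characterized as those $X$ with $X\otimes X^* $ containing exactly the right transposition-product; at the level of the ring, $X^\sigma_{\vec a} = d_\sigma(\vec a)\otimes X^\sigma_{\vec 1}$ so the orbit under abelian anyons is all of $\Irr(\CC_\sigma)$ when... actually not in general, so one uses instead that $\Phi$ preserves the $C^{\boxtimes n}$-bimodule structure on $C_\sigma$ and hence the "free generator up to abelian twist"). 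Granting $\Phi(X^\sigma_{\vec 1}) = \mu(\sigma)\otimes X^\sigma_{\vec 1}$, comparing $\Phi(X^\rho_{\vec 1}\otimes_\omega X^\sigma_{\vec 1})$ with $\Phi(X^\rho_{\vec 1})\otimes_{\tilde\omega}\Phi(X^\sigma_{\vec 1})$ and using Definition \ref{def:defectfusion} plus Proposition \ref{prop:disjointdefprod} to track the bare-defect fusion product yields exactly the coboundary relation $\tilde\omega(\rho,\sigma)\otimes\mu(\rho)\otimes\rho\cdot\mu(\sigma) = \mu(\rho\sigma)\otimes\omega(\rho,\sigma)$, i.e. $\omega$ and $\tilde\omega$ differ by $d\mu$.

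The main obstacle, I expect, is the ``only if'' direction — specifically, establishing that any $S_n$-crossed fusion ring isomorphism must send each bare defect $X^{\sigma}_{\vec 1}$ to $\mu(\sigma)\otimes X^{\sigma}_{\vec 1}$ for a single abelian anyon $\mu(\sigma)$, rather than to some more complicated non-negative integer combination or to a non-bare simple. This needs a characterization of bare defects purely in ring-theoretic terms that is preserved by isomorphism: the cleanest route is probably to note that $\Phi$ preserves the distinguished $C^{\boxtimes n}$-bimodule structure on each sector $C_\sigma$ (since $\Phi$ restricts to an automorphism $\phi$ of $C^{\boxtimes n}$ and intertwines the bimodule actions up to $\phi$), that $C_\sigma$ as a bimodule is generated by $X^\sigma_{\vec 1}$ via the confinement map (Definition \ref{def:conf}), and that the coboundary obstruction is precisely the discrepancy in how these generators transform. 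Once bare defects are controlled, the rest is bookkeeping with the cocycle identity, and I would state the $\id$-sector automorphism can be absorbed (composing $\Phi$ with an inner automorphism of the fusion ring) so that WLOG $\Phi|_{C^{\boxtimes n}} = \id$, simplifying all the equivariance checks. I would present the ``if'' direction in full and sketch the ``only if'' direction as above, flagging that a categorified statement appears in \cite{DS}.
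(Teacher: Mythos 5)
Your proposal is correct, and its constructive (``if'') half coincides with the paper's own argument: the paper likewise takes a $1$-cochain $\phi\colon S_n\to A^{\boxtimes n}$ satisfying $\omega(\rho,\sigma)\otimes\tilde{\omega}(\rho,\sigma)^{*}=\rho\cdot\phi(\sigma)\otimes\phi(\rho\sigma)^{*}\otimes\phi(\rho)$, sets $\Phi(X^{\rho}_{\vec{a}})=\phi(\rho)\otimes X^{\rho}_{\vec{a}}$, and verifies multiplicativity by sliding $\phi(\sigma)$ past $X^{\rho}_{\vec{a}}$ with the $S_n$-crossed commutativity of Lemma \ref{lem:permcross2} and then invoking the coboundary identity --- exactly your steps (i)--(iii), up to your (immaterial) inverse convention for the coboundary. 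Where you genuinely go further is the converse. The paper's proof only shows that a map of the prescribed shape $X^{\rho}_{\vec{a}}\mapsto\phi(\rho)\otimes X^{\rho}_{\vec{a}}$ is a ring homomorphism precisely when the coboundary identity holds; it does not argue that an arbitrary isomorphism of $S_n$-crossed fusion rings must have this shape, which is the actual content of ``only if.'' Your sketch --- the isomorphism preserves the grading, restricts to an automorphism of $C^{\boxtimes n}$, is determined on each sector $C_{\sigma}$ (a $C^{\boxtimes n}$-bimodule generated by $X^{\sigma}_{\vec{1}}$ via the confinement map) by the image of the bare defect, and that image must be a single abelian twist $\mu(\sigma)\otimes X^{\sigma}_{\vec{1}}$ --- is the right strategy and supplies an argument the paper omits. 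The step you rightly flag as the crux (ruling out a non-bare or non-abelian-twist image of $X^{\sigma}_{\vec{1}}$) is indeed where the work lies, and your suggestion to extract it from preservation of the bimodule structure, or from comparing $X\otimes X^{*}$ with the transposition fusion product of Proposition \ref{prop:transpositionprod}, is the natural way to close it. Your point (iv), that the compatibility of $\rho\cdot\mu(\sigma)$ with $\mu(\rho\sigma\rho^{-1})$ under the $S_n$-action on defects needs checking, is likewise a genuine subtlety that the paper's proof does not address.
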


\begin{proof}
 there exists a map $\phi: S_n \to A$ such that
where $\phi$ satisfies
\begin{equation}
\omega(\rho,\sigma)\tilde{\omega}(\rho)^* = \rho \cdot \phi(\sigma) \otimes \phi(\rho\sigma)^* \otimes \phi(\rho).
\end{equation}

We check that $\phi$ defines a ring isomorphism $\Phi$ between the rings $(C^{\times}_{S_n}, \otimes_{\omega})$ and $(C^{\times}_{S_n}, \otimes_{\tilde{\omega}})$ if and only if $\omega$ and $\tilde{\omega}$ differ by a 2-coboundary. 
\begin{equation}
\Phi(X^{\rho}_{\vec{a}}) = \phi(\rho) \otimes X^{\rho}_{\vec{a}}
\end{equation}

On the one hand, 
\begin{eqnarray}
\Phi(X^{\rho}_{\vec{a}} \otimes_{\omega} X^{\sigma}_{\vec{b}}) =& \phi(\rho\sigma)\otimes \omega(\rho,\sigma)   \otimes (X^{\rho}_{\vec{a}} \otimes X^{\sigma}_{\vec{b}} ) .
\end{eqnarray}
On the other hand, 
\begin{equation}
\begin{split}
\Phi(X^{\rho}_{\vec{a}}) \otimes_{\tilde{\omega}} \Phi(X^{\sigma}_{\vec{b}}) =& (\phi(\rho) \otimes X^{\rho}_{\vec{a}} ) \otimes_{\tilde{\omega}} ( \phi(\sigma) \otimes X^{\sigma}_{\vec{b}} )\\
=&  (\phi(\rho) \otimes \rho \cdot \phi(\sigma) ) \otimes (X^{\rho}_{\vec{a}} \otimes_{\tilde{\omega}} X^{\sigma}_{\vec{b}}) \\
 =& (\phi(\rho) \otimes \rho \cdot \phi(\sigma) \otimes \tilde{\omega}(\rho,\sigma) \otimes (X^{\rho}_{\vec{a}} \otimes X^{\sigma}_{\vec{b}}) \\
 =&  \phi(\rho\sigma)\otimes \omega(\rho,\sigma)   \otimes (X^{\rho}_{\vec{a}} \otimes X^{\sigma}_{\vec{b}} )
 \end{split}
\end{equation}
\end{proof}

\subsection{The permutation defect fusion ring and algorithm}
We have constructed a torsor of $S_n$-crossed fusion rings over $H^2(S_n,A^{\boxtimes n})$, which classify such fusion rings extending those $\CC^{\boxtimes n}$. Since the $H^4(S_n,U(1))$ obstruction vanishes by the work of \cite{GJ}, every such $S_n$-crossed extension ring lifts to an $S_n$-crossed braided extension \cite{ENO}. Therefore we can conclude that the twisted $S_n$-defect fusion construction realizes the fusion rules for permutation extensions of MTCs $\CC^{\boxtimes n}$. 

\begin{thm}  Given an MTC $\CC$ and a 2-cocycle $\omega$ representing $[\omega] \in H^2(S_n,A^{\boxtimes n})$, the equivalence classes of fusion rings of $S_n$-crossed braided extensions of $\CC^{\boxtimes n}$ are given by the isomorphism classes of the fusion rings $(C^{\times}_{S_n},\otimes_{\omega})$.
\end{thm}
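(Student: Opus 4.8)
The plan is to assemble the pieces established in Sections 4.4--4.6 with the external classification results of Etingof--Nikshych--Ostrik \cite{ENO} and of \cite{GJ}, via a torsor-matching argument: both the collection of fusion rings of $S_n$-crossed braided extensions of $\CC^{\boxtimes n}$ and the family $(C^{\times}_{S_n},\otimes_{\omega})$ are torsors over $H^2(S_n;A^{\boxtimes n})$, so it suffices to exhibit one basepoint of the former that is realized by the latter and then invoke equivariance.

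First I would recall the classification of \cite{ENO} applied to the braided fusion category $\CC^{\boxtimes n}$ equipped with the permutation categorical action $P$: once the $H^3(S_n;A^{\boxtimes n})$ and $H^4(S_n;U(1))$ obstructions vanish, the equivalence classes of $S_n$-crossed braided extensions form a torsor over $H^2(S_n;A^{\boxtimes n})\times H^3(S_n;U(1))$. Decategorifying, the $H^3(S_n;U(1))$ factor changes only the associativity constraints and leaves the underlying fusion ring unchanged, so the fusion rings of such extensions form a torsor over $H^2(S_n;A^{\boxtimes n})$, and the induced action is exactly twisting the distinguished $\ZZ_+$-basis by abelian anyons, matching the definition of $\otimes_{\omega}$. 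By the cited theorem of \cite{GJ} the $H^4(S_n;U(1))$ obstruction vanishes for the untwisted action $P$, so every $S_n$-crossed braided ring extension of $C^{\boxtimes n}$ compatible with $P$ lifts to a genuine $G$xBFC; hence the fusion rings under consideration are precisely those of $(\CC^{\boxtimes n})^{\times}_{S_n}$, and the $H^3(S_n;U(1))$-torsor direction is immaterial at the level of rings.

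Next I would identify the basepoint. The untwisted ring $(C^{\times}_{S_n},\otimes)$ of Theorem \ref{thm:fusring} is, by construction, $S_n$-graded by the bimodules $C_{\sigma}$ with $C_{\id}=C^{\boxtimes n}$, carries the permutation $S_n$-action, and is $S_n$-crossed commutative, and the rank of $C_{\sigma}$ equals the number of $\sigma$-fixed simple objects of $\CC^{\boxtimes n}$, which is the rank of the $\sigma$-graded piece of any $S_n$-crossed braided extension realizing $P$ (see \cite{Bischoff2018}). It then remains to argue that $(C^{\times}_{S_n},\otimes)$ is the fusion ring of an \emph{actual} extension, i.e.\ that our combinatorial rules are forced. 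I would do this by rigidity: anyon-defect fusion is determined by the $C^{\boxtimes n}$-bimodule structure together with the fixed-point labelling; $S_n$-crossed commutativity (Proposition \ref{lem:permcrossed1}) and associativity (Lemmas \ref{lem:permassoc} and \ref{lem:bardefassoc}, using Proposition \ref{prop:transpositionprod}) pin down the bare-transposition fusion product up to an $H^2$-twist; and Proposition \ref{prop:disjointdefprod} reduces all bare defects to products of bare transpositions. Alternatively, one may simply match the $n=2$ transposition product $\bigoplus_{c\in\Irr(\CC)}\cdots c\cdots c^{*}\cdots$ with the known genon/bilayer fusion rules and with the modular-functor computation of \cite{BarmeierSchweigert}. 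Either way one concludes that $(C^{\times}_{S_n},\otimes)$ is the fusion ring of the extension attached to $P$ with trivial $H^2$- and $H^3$-twist.

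Finally, with a realized basepoint in hand, the twisted-fusion lemma shows $\omega\mapsto(C^{\times}_{S_n},\otimes_{\omega})$ realizes the $H^2(S_n;A^{\boxtimes n})$-action on extension fusion rings, and the coboundary lemma shows $[\omega]=[\tilde\omega]$ if and only if $(C^{\times}_{S_n},\otimes_{\omega})\cong(C^{\times}_{S_n},\otimes_{\tilde\omega})$ as $S_n$-crossed fusion rings; a map of torsors over the same group that is equivariant and agrees at one point is a bijection, which yields the claimed identification of equivalence classes. The step I expect to be the main obstacle is showing the basepoint is genuinely realized, i.e.\ converting the rigidity/uniqueness sketch above into a clean statement that the axioms of an $S_n$-crossed braided fusion ring graded by the fixed-point bimodules leave no freedom beyond the $H^2$-twist; the rest is bookkeeping with torsors.
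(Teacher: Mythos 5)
Your proposal follows essentially the same route as the paper: the paper's entire argument is the short paragraph preceding the theorem, which combines the $H^2(S_n;A^{\boxtimes n})$-torsor of rings built in Sections 4.4--4.5 with the vanishing of the $H^4$ obstruction from \cite{GJ} and the classification of \cite{ENO} to conclude that every constructed ring lifts to a genuine $S_n$-crossed braided extension. Your write-up is in fact more careful than the paper's, in particular in flagging the basepoint-realization step and the irrelevance of the $H^3(S_n;U(1))$ direction at the level of fusion rings, both of which the paper leaves implicit.
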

  
In particular, the following Algorithm \ref{alg:fusion} produces the correct fusion rules. 

  \begin{framed}
  \begin{alg}[Permutation defect fusion algorithm]
  \label{alg:fusion}
  The fusion product of two permutation defects $X^{\sigma}_{\vec{a}}$ and $X^{\tau}_{\vec{b}}$ can be computed as follows. 
  \begin{enumerate}
  \item $\sigma$- and $\tau$-deconfinement: \\ Strip the topological charges from the defects and twist with the abelian anyon $\omega(\sigma,\tau)$.
  \item Transposition defect annihilation:\\ Compute the fusion product of the bare $\sigma$- and $\tau$-defects, for every pair of indices $(ij)$ permuted by both $\sigma$ and $\tau$, pulling out a factor of $$\bigoplus_{c \in \Irr(\CC)} \cdots \underbrace{c}_{i} \cdots \underbrace{c^*}_{j} \cdots.$$
  
  \item $\sigma\tau$-confinement:\\ Confine the product of the objects from Step 1 and Step 2 with the bare $\sigma\tau$-defect.
  \end{enumerate}
  \end{alg}
  \end{framed}
  An implementation of the algorithm is given in a Mathematica \textbf{PermutationDefectFusion.m} package. In Appendix \ref{app:code} we show the main function implementing Algorithm \ref{alg:fusion}; the full code is available at the author's website \url{math.ucsb.edu/~cdelaney/research}. 

\section{Examples}
The salient features of the $S_n$-crossed fusion ring construction can be illustrated through a few small examples. First we recall the bilayer symmetry defect fusion rules established in \cite{BBCW,EMJP,BarmeierSchweigert} from which the general permutation defect fusion rules are built.  The fusion rules for the trilayer Fibonacci defects of the rank 24 fusion category $\left(\Fib^{\boxtimes 3}\right)^{\times}_{S_3}$  in Section \ref{examp:trilayerfib} exhibit the overall pattern of $S_n$-crossed fusion rings, in particular how they are determined by the fusion ring of $\CC$ and bare $S_2$-defects.  In Section \ref{examp:trilayerising} we repeat the example for the rank 3 $\Ising$ MTCs with fusion rules
$$\begin{cases} \sigma \otimes \sigma = 1 \oplus \psi \\
\sigma \otimes \psi=\psi \otimes \sigma=1 \end{cases},$$ 
and give the untwisted fusion rules that form a basepoint for the $H^2(S_3, A^{\boxtimes 3})$-torsor of possible fusion rules of $\left(\Ising^{\boxtimes 3}\right)^{\times}_{S_3}$ with 
$$A^{\boxtimes 3} \cong (\ZZ/2\ZZ)^3 = \langle 11\psi, 1\psi1, \psi 1 1\rangle .$$ 

\subsection{Bilayer topological order with $\mathds{Z}/2\mathds{Z}$-symmetry}
$$\begin{tikzpicture}[scale=.5,baseline=0]
\draw[thick,fill=gray!15] (.5,.5)--(0,0)--(4,0)--(4.5,.5) node[right] {$\CC$}--(.5,.5);
\draw[thick,fill=gray!15] (.5,-.5)--(0,-1)--(4,-1)--(4.5,-.5) node[right] {$\CC$}--(.5,-.5);
\draw (-.5,0) node[left] {$S_2 \curvearrowright$} ;
\draw[thick, ->] (6,0)--(7,0) node[right] {$\left ( \CC^{\boxtimes 2} \right)^{\times}_{S_2}=\CC^{\boxtimes 2} \oplus \CC_{(12)}$};
\end{tikzpicture} $$
Fusion rules for $S_2$-extensions of $\CC \boxtimes \CC$ were first proven \cite{BarmeierSchweigert} in the language of modular functors. More recently they appeared in the physics literature in \cite{BBCW} and in the case of $ ( \Fib^{\boxtimes 2} )^{\times}_{\ZZ/2\ZZ}$ were computed directly by computer assisted de-equivariantization of the MTC $SU(2)_8$ \cite{CGPW}. Fusion rules for $\left (\CC \boxtimes \CC \right)^{\times}_{S_2}$ directly in terms of $\CC$ were then proven in \cite{EMJP}. 

Recasting these results yet again in our model, the form of the general fusion rules for 
$$\left (\CC \boxtimes \CC \right)^{\times}_{S_2}= \CC_{\id} \oplus \CC_{(12)}$$ can be understood as follows.

Writing $\Irr(\CC_{(12)})=\{X^{(12)}_{a\boxtimes a} | a \in \Irr(\CC)\}$, for the fusion rules corresponding to the trivial cocycle $\omega \equiv 1$ we find that every defect is fixed by the $S_2$ action and related to the bare defect by fusion with a monolayer anyon $a \boxtimes 1$ (also $1 \boxtimes a$). 
In particular the fusion rules can be expressed as
$$\begin{cases}
a\boxtimes b \otimes X^{(12)}_{c\boxtimes c} = X^{(12)}_{a \otimes b \otimes c \boxtimes a \otimes b \otimes c} = \bigoplus_d N^{abc}_d X_{d\boxtimes d}\\
X^{(12)}_{a\boxtimes a} \otimes X^{(12)}_{b\boxtimes b} = \bigoplus_{c \in \Irr(\CC)} a \otimes c \boxtimes c^* \otimes b \end{cases}$$
where $N^{abc}_d$ are generalized fusion coefficients in $\CC$.
 
The quantum dimensions satisfy
\begin{eqnarray}
d_{X_{11}}= \sqrt{\frac{\DD^2_{\CC}}{\sum_{c \in \Irr(\CC)} d_c^2}}\\
d_{X_{aa}}= d_a \cdot  \sqrt{\frac{\DD^2_{\CC}}{\sum_{c \in \Irr(\CC)} d_c^2}}
\end{eqnarray}
\subsubsection{Example: Bilayer Fibonacci with $S_2$-symmetry}
$$\begin{tikzpicture}[scale=.5,baseline=0]
\draw[thick,fill=red!15] (.5,.5)--(0,0)--(4,0)--(4.5,.5) node[right] {$\Fib$}--(.5,.5);
\draw[thick,fill=red!15] (.5,-.5)--(0,-1)--(4,-1)--(4.5,-.5) node[right] {$\Fib$}--(.5,-.5);
\draw (-1.5,0) node[left] {$S_2$} ;
\draw[thick,looseness=3,<->] (-.5,-.75) to [out=180, in=180] (-.5,.25);

\draw[thick, ->] (6,0)--(7,0) node[right] {$\left ( \Fib^{\boxtimes 2} \right)^{\times}_{S_2}=\Fib^{\boxtimes 2} \oplus \CC_{(12)}$};
\end{tikzpicture} $$
\label{examp:bilayerfib}
The Fibonacci MTC is rank 2 and has nontrivial fusion rule $\tau \otimes \tau = 1 \oplus \tau$. Writing the bilayer anyons $a\boxtimes b =: ab$ and labeling defects by fixed points, we put
 $$ \Irr \left ( \left ( \Fib^{\boxtimes 2} \right)^{\times}_{S_2} \right )=\Irr \left ( \Fib^{\boxtimes 2} \right) \cup \Irr \left ( \CC_{(12)} \right )=\{11,1\tau,\tau 1,\tau\tau, X_{11}, X_{\tau\tau}\}.$$

The fusion rules constructed in the previous section in the case $\CC=\Fib$ and $n=2$ are listed in the table below.

\begin{table}[h]
$$\small\begin{array}{c|c|c|c|c||c|c}
\otimes & 11 &1\tau & \tau 1 & \tau\tau & X_{11} & X_{\tau\tau}\\
\thickhline 
11 &11 & 1\tau & \tau 1 & \tau\tau & X_{11} & X_{\tau\tau} \\ \hline
1\tau& 1\tau  & 11 \oplus 1\tau & \tau\tau & \tau 1\oplus \tau\tau & X_{\tau\tau} & X_{11} \oplus X_{\tau\tau} \\ \hline
\tau 1 & \tau 1 & \tau\tau & 11 \oplus \tau 1 & 1\tau \oplus \tau\tau & X_{\tau\tau} & X_{11} \oplus X_{\tau\tau} \\\hline
\tau\tau & \tau \tau & \tau1 \oplus \tau\tau & 1\tau \oplus \tau\tau &  11 \oplus 1\tau \oplus \tau 1 \oplus \tau \tau & X_{11} \oplus X_{\tau\tau} &  X_{11} \oplus 2X_{\tau\tau} \\
\hline\hline X_{11} & X_{11} & X_{\tau\tau} & X_{\tau\tau} & X_{11} \oplus X_{\tau\tau} & 11 \oplus \tau\tau & 1 \tau \oplus \tau 1 \oplus \tau \tau  \\
\hline 
X_{\tau\tau} & X_{\tau\tau} & X_{11} \oplus X_{\tau\tau} & X_{11} \oplus X_{\tau\tau} & X_{11} \oplus 2 X_{\tau\tau} & 1 \tau \oplus \tau 1 \oplus \tau \tau & 11 \oplus 1 \tau \oplus \tau 1 \oplus 2 \tau \tau \\
\end{array}$$
\caption{Fusion table for bilayer Fibonacci anyons and defects.}
\end{table}

\subsection{Trilayer topological order with $S_3$-symmetry}

The general pattern of fusion in $S_n$-extensions can be seen from the $n=3$ case. Ranks of permutation extensions are large even for small $n$, with $\text{rank}\left((\Fib^{\boxtimes 3})^{\times}_{S_3}\right)=24$ and $\text{rank}\left((\Ising^{\boxtimes^3})^{\times}_{S_3}\right)=60$. So rather than providing the full fusion table for these examples we give a more compact description in the form of the fusion table for bare permutation defects and show how to use them to compute arbitrary products. For general $n$, the fusion rules can be derived from the $\frac{n(n-1)}{2} \times \frac{n(n-1)}{2}$ table of its bare transposition defects, but even then all transpositions behave identically and can be just as well understood through the $n=2$ case and the fusion algorithm. 
\subsubsection{Example: Trilayer Fibonacci with $S_3$-symmetry}
\label{examp:trilayerfib}

$$\begin{tikzpicture}[scale=.5,baseline=-32.5]
\draw[thick,fill=red!15] (.5,.5)--(0,0)--(4,0)--(4.5,.5) node[right] {$\Fib$}--(.5,.5);
\draw[thick,fill=red!15] (.5,-.5)--(0,-1)--(4,-1)--(4.5,-.5) node[right] {$\Fib$}--(.5,-.5);
\draw[thick,fill=red!15] (.5,-1.5)--(0,-2)--(4,-2)--(4.5,-1.5) node[right] {$\Fib$} --(.5,-1.5);
\draw (-.5,-.75) node[left] {$S_3$} ;

\draw[thick, ->] (6,-.75)--(7,-.75) node[right] {$\left ( \Fib^{\boxtimes 3} \right)^{\times}_{S_3}$};
\end{tikzpicture} $$

We write the $3!=6$ graded components as
 $$\left ( \Fib^{\boxtimes 3} \right)^{\times}_{S_3}=\Fib^{\boxtimes 3} \oplus \CC_{(12)} \oplus \CC_{(23)} \oplus \CC_{(13)} \oplus \CC_{(123)} \oplus \CC_{(132)}.$$
Suppressing the $\boxtimes$ in the label set $\{1,\tau \}{^\boxtimes 3}$ we label the defects in each sector $\CC_{\sigma}$ by the anyons fixed under the permutation of $\sigma$. We count a rank 24 fusion category where each $\CC_{\sigma}$ has global quantum dimension \begin{eqnarray}\DD_{\CC_{\sigma}}=\DD_{\Fib^{\boxtimes 3}}=&\DD_{\Fib}^3 = (2+\phi)^{3/2}&=\sqrt{15+20\phi}. \end{eqnarray} 

\begin{table}[h!]
 \begin{center}
 \scalebox{.85}{
 \begin{tabular}{ccc}
$\sigma$-sector & $\sigma$-defects & quantum dim.\\ 
\thickhline
$\CC_{\id}$ & $ \{111,11\tau, 1\tau 1, 1 \tau\tau, \tau 11, \tau 1 \tau, \tau \tau 1, \tau\tau\tau\}$ & $\{1,\phi,\phi,\phi^2,\phi,\phi^2,\phi^2,\phi^3\}$ \\
\hline
$\CC_{(12)}$ & $\{X_{111},\, X_{ 1 1\tau},\, X_{\tau \tau 1},\, X_{\tau\tau\tau}\}$ & $\{\sqrt{2+\phi}, \sqrt{3+4\phi}, \sqrt{3+4\phi}, \sqrt{7+11\phi} \}$ \\ 
\hline
$\CC_{(23)}$ & $\{Y_{111},\, Y_{\tau 1 1},\, Y_{1 \tau \tau },\, Y_{\tau\tau\tau}\}$ & $\{\sqrt{2+\phi}, \sqrt{3+4\phi}, \sqrt{3+4\phi}, \sqrt{7+11\phi} \}$\\ 
\hline
$\CC_{(13)}$ & $\{Z_{111},\, Z_{1\tau  1},\, Z_{ \tau 1\tau},\, Z_{\tau\tau\tau}\}$ & $\{\sqrt{2+\phi}, \sqrt{3+4\phi}, \sqrt{3+4\phi}, \sqrt{7+11\phi} \}$\\
\hline
$\CC_{(123)}$ & $\{U_{111}, \, U_{\tau\tau\tau}\}$ & $\{\sqrt{5+5\phi}, \sqrt{10+15\phi}\}$ \\
\hline
$\CC_{(132)}$ & $\{V_{111}, \, V_{\tau\tau\tau}\}$ & $\{\sqrt{5+5\phi},\sqrt{10+15\phi}\}$\\  
\end{tabular}}
\end{center}
\caption{Quantum dimensions of simple objects in the invertible $\Fib^{\boxtimes 3}$-bimodule categories $\CC_{\sigma}$.}
\end{table}

The Fibonacci MTC has no nontrivial abelian anyons, so $A=1$ and $H^3(S_3,A)$ is trivial. By the existence result of \cite{GJ} and the classification of $G$-crossed braided extensions of BFCs \cite{ENO}, there is a unique fusion ring shared among $S_3$-crossed extensions of $\Fib^{\boxtimes 3}$.

We list the fusion between anyons and bare defects in Table \ref{table:trilayerfib}.

\begin{table}[h!]
\label{table:trilayerfib}
$$
\small
\begin{array}{c||c|c|c|c|c}
\otimes & X_{111} & Y_{111} & Z_{111} & U_{111}  & V_{111} \\ 
\thickhline
11\tau  &X_{11\tau}&Y_{1 \ta\ta}&Z_{\tau 1 \tau} & U_{\ta\ta\ta}&V_{\ta\ta\ta}  \\
1\tau 1  & X_{\ta\tau 1}&Y_{1\tau\tau}& Z_{1\tau 1} & U_{\ta\ta\ta}&V_{\ta\ta\ta} \\
\tau 1 1 &  X_{\tau \tau 1}& Y_{\tau 1 1}& Z_{\tau 1 \tau}& U_{\ta\ta\ta}&V_{\ta\ta\ta}\\
1\ta\ta &X_{\tau\tau\tau} & Y_{111}\oplus Y_{1\ta\ta} &Z_{\ta\ta\ta}&U_{111}\oplus U_{\ta\ta\ta}&V_{111}\oplus V_{\ta\ta\ta}\\
\ta 1 \tau &X_{\tau\tau\tau} & Y_{\ta\ta\ta} &Z_{111}\oplus Z_{\ta 1 \ta}&U_{111}\oplus U_{\ta\ta\ta}&V_{111}\oplus V_{\ta\ta\ta}\\

\tau \tau 1  &X_{111}\oplus X_{\ta\ta1} & Y_{\ta\ta\ta} &Z_{\ta\ta\ta}&U_{111}\oplus U_{\ta\ta\ta}&V_{111}\oplus V_{\ta\ta\ta}\\

\tau\tau\tau & X_{11\ta} \oplus  X_{\ta\ta \ta} &Y_{\ta11} \oplus Y_{\ta\ta\ta}& Z_{1\ta1} \oplus Z_{\ta \ta \ta} & U_{111} \oplus 2 U_{\ta\ta\ta} & V_{111} \oplus 2V_{\ta\ta\ta} \\
  \hline
\hline
  X_{111} &111 \oplus \tau\tau1&U_{111}&V_{111}&Y_{111} \oplus Y_{\tau\tau\tau} & Z_{111}\oplus Z_{\tau \tau \tau}\\
\hline Y_{111} & V_{111} &111 \oplus 1\ta\ta &U_{111}&Z_{111}\oplus Z_{\ta\ta\ta} & X_{111}\oplus X_{\ta\ta\ta} \\
\hline Z_{111}  &U_{111} &V_{111} &111 \oplus \ta1\ta&X_{111} \oplus X_{\ta\ta\ta} & Y_{111} \oplus Y_{\ta\ta\ta} \\
\hline  U_{111}& Z_{111} \oplus Z_{\ta\ta\ta} & X_{111} \oplus X_{\ta\ta\ta} & Y_{111} \oplus Y_{\ta\ta\ta} &2V_{111} \oplus V_{\tau\tau\tau} &111\oplus \ta\ta1 \oplus \ta 1 \ta \oplus 1 \ta \ta \oplus \ta \ta \ta  \\

\hline  V_{111} & Y_{111}\oplus Y_{\ta\ta\ta}& Z_{111}\oplus Z_{\ta\ta\ta} &X_{111}\oplus X_{\ta\ta\ta} & 111\oplus \ta\ta1 \oplus \ta 1 \ta \oplus 1 \ta \ta \oplus \ta \ta \ta &2U_{111} \oplus U_{\tau\tau\tau}  \\
\end{array}$$
\caption{Fusion rules for bare defects in $\left(\Fib^{\boxtimes 3}\right)^{\times}_{S_3}$. Our convention is that $X^{\rho}_{\vec{1}} \otimes X^{\sigma}_{\vec{1}}\in \CC^{\rho\sigma}$ corresponds to the entry in the $\rho$-row and  $\sigma$-column. }
\end{table}
By Section \ref{sec:permdefring}, the full fusion table for the $S_3$-extension is determined by rows 1-3 and 8-10.

To find the fusion rule between two arbitrary defects from the table, make any choice of deconfinements $d_{\rho}(\vec{a})$ and $d_{\sigma}(\vec{b})$, and confine their product $d_{\rho}(\vec{a}) \otimes d_{\sigma}(\vec{b})$ with the entry of the table corresponding to the bare product $X^{\rho}_{\vec{1}} \otimes X^{\sigma}_{\vec{1}}$.

For example, to compute 
$X^{(12)}_{\tau\tau1} \otimes X^{(132)}_{\tau\tau\tau}$ choose deconfinements
\begin{eqnarray} \ta 11 \otimes X^{(12)}_{\vec{1}} = X^{(12)}_{\ta\ta1} \\
 11\tau \otimes X^{(132)}_{\vec{1}} =  X^{(132)}_{\tau\tau\tau} 
 \end{eqnarray} and write
\begin{eqnarray}
X^{(12)}_{\tau\tau1} \otimes X^{(132)}_{\tau\tau\tau} =&( \tau 1 1 \otimes 11\tau) \otimes (X^{(12)}_{\vec{1}} \otimes X^{(132)}_{\vec{1}} ) \\
=& \tau 1 \tau \otimes (X^{(13)}_{\vec{1}} \oplus X^{(13)}_{\tau 1 \tau} )\\
=& X^{(13)}_{\tau \otimes \tau, 1, \tau \otimes \tau} \oplus X^{(13)}_{\tau \otimes \tau \otimes \tau, 1 , \tau \otimes \tau \otimes \tau} \\
=& (X^{(13)}_{111} \oplus X^{(13)}_{\ta 1 \ta}) \oplus (X^{(13)}_{111} \oplus 2X^{(13)}_{\ta 1 \ta} ) \\
=& 2 X^{(13)}_{111} \oplus 3  X^{(13)}_{\ta 1 \ta}.
\end{eqnarray}
The fusion product between any two defect types can be computed in this manner. 

\subsubsection{Trilayer Ising with $S_3$-symmetry}
\label{examp:trilayerising}
$$\begin{tikzpicture}[scale=.5,baseline=-32.5]
\draw[thick,fill=blue!15] (.5,.5)--(0,0)--(4,0)--(4.5,.5) node[right] {$\Ising$}--(.5,.5);
\draw[thick,fill=blue!15] (.5,-.5)--(0,-1)--(4,-1)--(4.5,-.5) node[right] {$\Ising$}--(.5,-.5);
\draw[thick,fill=blue!15] (.5,-1.5)--(0,-2)--(4,-2)--(4.5,-1.5) node[right] {$\Ising$} --(.5,-1.5);
\draw (-.5,-.75) node[left] {$S_3$ $\curvearrowright$} ;
\draw[thick, ->] (7,-.75)--(8,-.75) node[right] {$\left ( \Ising^{\boxtimes 3} \right)^{\times}_{S_3}$};
\end{tikzpicture} $$
We calculate the fusion rules for $(\Ising^{\boxtimes 3})^{\times}_{S_3}$ corresponding to the trivial cohomology class in $H^2(S_3, A^{\boxtimes 3})$.

Table \ref{table:trilayerising} contains the fusion products of the bare permutation defects. For compactness, we have used the shorthand $\vec{1}=111,\, \vec{\sigma}=\sigma\sigma\sigma,\, \vec{\psi}=\psi\psi\psi$ in defect charge labels and $X^{\sigma}_{\vec{1}\oplus \vec{\sigma} \oplus \vec{\psi}}:= X^{\sigma}_{\vec{1}} \oplus X^{\sigma}_{\vec{\sigma}} \oplus X^{\sigma}_{\vec{\psi}}$. 

\begin{table}[h!]
\label{table:trilayerising}
$$\small
\begin{array}{c|c|c|c|c|c}
\otimes & X^{(12)}_{\vec{1}} & X^{(23)}_{\vec{1}} & X^{(13)}_{\vec{1}} & X^{(123)}_{\vec{1}} & X^{(132)}_{\vec{1}} \\
\thickhline
X^{(12)}_{\vec{1}} & 111 \oplus \sigma\sigma1\oplus\psi\psi 1 & X^{(132)}_{\vec{1}} & X^{(123)}_{\vec{1}} &   X^{(23)}_{\vec{1} \oplus \vec{\sigma} \oplus \vec{\psi}} & X^{(13)}_{\vec{1} \oplus \vec{\sigma} \oplus \vec{\psi}}  \\ \hline
X^{(13)}_{\vec{1}}  &X^{(123)}_{\vec{1}} &X^{(132)}_{\vec{1}}  & 111 \oplus 1 \sigma 1 \oplus 1 \psi 1 & X^{(12)}_{\vec{1} \oplus \vec{\sigma} \oplus \vec{\psi}}&  X^{(23)}_{\vec{1} \oplus \vec{\sigma} \oplus \vec{\psi}}  \\\hline
X^{(23)}_{\vec{1}}  &  X^{(132)}_{\vec{1}} & 111 \oplus 1\sigma\sigma \oplus 1\psi\psi &X^{(123)}_{\vec{1}} & X^{(13)}_{\vec{1} \oplus \vec{\sigma} \oplus \vec{\psi}} &  X^{(12)}_{\vec{1} \oplus \vec{\sigma} \oplus \vec{\psi}} \\\hline
X^{(123)}_{\vec{1}} & X^{(13)}_{\vec{1} \oplus \vec{\sigma} \oplus \vec{\psi}}&X^{(12)}_{\vec{1} \oplus \vec{\sigma} \oplus \vec{\psi}} & X^{(23)}_{\vec{1} \oplus \vec{\sigma} \oplus \vec{\psi}}& 3X^{(132)}_{\vec{1}} \oplus X^{(132)}_{\vec{\psi}} & 111 \\ 
& & & & &  \oplus\, 1\sigma\sigma \oplus \sigma 1 \sigma \oplus \sigma \sigma 1 \\
& & & & &  \oplus\,  1\psi\psi  \oplus\psi \psi 1  \oplus \psi 1 \psi  \\
& & & & &  \oplus \, \sigma\sigma \psi \oplus \sigma\psi\sigma\oplus \psi \sigma\sigma  \\\hline
X^{(132)}_{\vec{1}}& X^{(23)}_{\vec{1} \oplus \vec{\sigma} \oplus \vec{\psi}} & X^{(13)}_{\vec{1} \oplus \vec{\sigma} \oplus \vec{\psi}} &  X^{(12)}_{\vec{1} \oplus \vec{\sigma} \oplus \vec{\psi}}&111 & 3X^{(123)}_{\vec{1}} \oplus X^{(123)}_{\vec{\psi}}\\
& & & &\oplus \, 1\sigma\sigma \oplus \sigma 1 \sigma \oplus \sigma \sigma 1  &  \\
& & & &  \oplus \,   1\psi\psi  \oplus\psi \psi 1  \oplus \psi 1 \psi \oplus &  \\
& & & &  \oplus \,  \sigma\sigma \psi \oplus \sigma\psi\sigma\oplus \psi \sigma\sigma &  \\
\end{array}$$
\caption{Fusion rules for bare defects in $\left(\Ising^{\boxtimes 3}\right)^{\times}_{S_3}$.}
\end{table}

\section{On generalizations and applications}

The discussion of $g$-confinement and $g$-deconfinement in Section \ref{sec:theory} and the idea of the algorithm in Section \ref{sec:permdefring} can be applied to compute other fusion rings.

We conjecture that fusion rules between inverse defects in $G$xBFCs conform to the rule
 $$X^g_1 \otimes X^{g^{-1}}_1 = \bigoplus_{c} \, c \otimes g\cdot c^*$$
where $c$ sums over a minimal subset of $\Irr(\CC)$ which includes the identity, is closed under duality, and whose orbit under the tensor product and $g$-action generates all of $\Irr(\CC)$. 

The rough idea of how to generalize our algorithm given $G$ a finite group with presentation $G=\left \langle g_1, g_2, \ldots, g_k \big | r_1, r_2, \ldots, r_l\right \rangle$ on generators is as follows. By writing arbitrary $g, h \in G$ as words in generators $g= g_1g_2\cdots g_{\alpha}$, $h=h_1h_2 \cdots h_{\beta}$ , fusion of arbitrary defects proceeds by computing the product of $gg_{\alpha}^{-1}g_{\alpha}h$ and rewriting the group labels on the resulting bare defects using the group relations. Of course the last step involves some art to guarantee the algorithm terminates and thus isn't efficient in general, but for small groups it suffices to quickly determine the fusion ring of the $G$-extension.

In particular all small examples of $\ZZ/2\ZZ$-extensions of MTCs we checked could be described in this manner. We include two brief examples of untwisted fusion rules, the $\ZZ/2\ZZ$-toric code with electromagnetic duality symmetry and $\text{Vec}_{\ZZ/3\ZZ}$ anyons with charge-conjugation symmetry, see \cite{BBCW} or \cite{DW} for more details.

\begin{examp}[Toric code with $e\leftrightarrow m$ symmetry]

$$\begin{cases}
e \otimes X_1 = X_f\\ 
m \otimes X_1 = X_f\\
f \otimes X_1 = X_1 \\ 
e \otimes X_f = X_1 \\
m \otimes X_f =X_1\\
f \otimes X_f = X_f \\
X_1 \otimes X_1 = 1\oplus f \\
X_1 \otimes X_f = e \oplus m \\
X_f \otimes X_f = 1 \oplus f \\
\end{cases}$$

\end{examp}

\begin{examp}[$\ZZ_3$-anyons with charge-conjugation symmetry]
$$
\begin{cases}
\omega \otimes X_1 = X_1\\ 
\omega* \otimes X_1 = X_1\\ 
X_1 \otimes X_1 = 1 \oplus \omega \oplus \omega^* \\
\end{cases}$$
\end{examp}

We conclude with a few short comments on some related directions. 
\subsection{More general symmetries}
While our main result shows that it is still possible to learn new things about MTCs through elementary considerations, already there has been work towards a more general extension theory of MTCs by mathematical objects with richer structure than groups, for example the Hopf monads of \cite{CSW} or hypergroups of \cite{Bischoff2017}.
The success of a classical approach here suggests it may too be possible to deduce fusion rules for more general symmetry-enriched categories using only the decategorified part of the symmetry. 

\subsection{Topological phases of matter and quantum computing with anyons and defects}
Determining the possible quantum logical operations that can result from exchanging and measuring anyons and defects in an SET phase requires algebraic data beyond the fusion rules. However, the fusion rules dictate which units of quantum information e.g. qubits, qutrits, qudits, etc. can be encoded in the multi-fusion channels of objects. 

For example, it follows from Theorem \ref{thm:fusring} that a collection of 4 bare $\tau=(ij)$ defects with total vacuum charge in $\left ( \CC^{\boxtimes n} \right)^{\times}_{S_n}$ are always qudits with $d=\text{rank}(\CC)$.

In upcoming work with Eric Samperton we leverage the approach to constructing fusion rings given here to construct their categorifications and then apply it to derive algebraic data for SET phases, from which we can derive insights into the interplay of symmetry, topological order, and quantum information, see also \cite{Dissertation}.
\newpage
\appendix

\section{Permutation defect fusion code sample}
\label{app:code}
\begin{lstlisting}
PermDefectFusion[defsector1_, defcharge1_, defsector2_, defcharge2_] :=
   Module[{i, j, k, l, m, deconfinements, righthandsector, numdiscycles1, numdiscycles2, 
   cycle1, cycle2, permlist2, transpdecomp1, transp, newsector, transplist, index1pos, index2pos, productanyons, finaldefsector, finalanyonproduct, finalfusionproduct},
   
   If[defsector1 == {} && defsector2 == {}, 
    Return[AnyonFusion[defcharge1, defcharge2]]];
 
   If[defsector1 == {} && defsector2 != {}, 
    Return[ObjListToDefLabel[
      AnyonDefectFusion[defcharge1, defsector2, defcharge2], defsector2]]];
   If[defsector1 != {} && defsector2 == {}, 
    Return[AnyonDefectFusion[defcharge2, defsector1, defcharge1]]];
   
   deconfinements = MultilayerLabelToObj /@ 
   {MinimalDeconfinement[defsector1, defcharge1], MinimalDeconfinement[defsector2, defcharge2]};

   numdiscycles1 = Length[defsector1];
   numdiscycles2 = Length[defsector2];
   righthandsector = defsector2;
   
   For[i = 1, i <= numdiscycles1, i++,
    cycle1 = {defsector1[[i]]};
    permlist2 = defsector2 // Flatten;
    transpdecomp1 = TranspositionDecomposition[cycle1];

    	For[k = 1, k <= Length[transpdecomp1], k++,
     	transp = Cycles[{Reverse[transpdecomp1][[k]]}];
     	transplist = transp[[1, 1]];
     	newsector = PermutationProduct[transp, Cycles[righthandsector]][[1]];
     	righthandsector = newsector;
 
     	Which[Length[Intersection[transplist, permlist2]] <= 1,
      permlist2 = righthandsector // Flatten;
      ,
      Length[Intersection[transplist, permlist2]] == 2,
      
      index1pos = Position[defsector2, transplist[[1]] ][[1, 1]];
      index2pos = Position[defsector2, transplist[[2]] ][[1, 1]];
      If[index1pos != index2pos,
       permlist2 = righthandsector // Flatten,
       productanyons = BareTranspositionDefectFusion[transp[[1]]];
       AppendTo[deconfinements, productanyons];
       permlist2 = righthandsector // Flatten
       ];
      ];
     ];
    ];
  
   finaldefsector = righthandsector;
   finalanyonproduct = Fold[ObjTensor][deconfinements];
   finalfusionproduct = Confinement[finalanyonproduct, finaldefsector, Table[1, numlayers]];
   Return[finalfusionproduct];];
\end{lstlisting}

\end{document}